\newtheorem{teo}{Theorem}[section]
\newtheorem{lem}[teo]{Lemma}
\newtheorem{prop}[teo]{Proposition}
\newtheorem{dfn}[teo]{Definition}
\newtheorem{rk}[teo]{Remark}
\newtheorem{ex}[teo]{Example}
\def\<{\langle}
\def\>{\rangle}
\def\ss{\subset}
\def\a{\alpha}
\def\e{\varepsilon}
\def\f{{\varphi}}
\def\Id{\operatorname{Id}}
\def\Index{\operatorname{Index}}
\def\1{\mathbf 1}
\begin{document}

\title{Quantization of branched coverings}

\author{Alexander Pavlov}
\thanks{Partially supported by the RFBR (grant 08-01-00034), by the
joint RFBR-DFG project (RFBR grant 07-01-91555 / DFG project
``K-Theory, $C^*$-algebras, and Index theory''), and by the
`Italian project Cofin06 - Noncommutative Geometry, Quantum Groups
and Applications'}
\address{All-Russian Institute of Scientific and Technical Information,
Russian Academy of Sciences (VINITI RAS), Usievicha str. 20,
125190 Moscow A-190, Russia} \email{axpavlov@gmail.com}

\author{Evgenij Troitsky}
\thanks{}
\address{Dept. of Mech. and Math., Moscow State University,
119992 GSP-2  Moscow, Russia}
\email{troitsky@mech.math.msu.su}
\urladdr{
http://mech.math.msu.su/\~{}troitsky}

\begin{abstract}
We identify branched coverings (continuous open surjections
$p:Y\to X$ of Hausdorff spaces with uniformly bounded number of
pre-images) with Hilbert C*-modules $C(Y)$ over $C(X)$ and with
faithful unital positive conditional expectations $E:C(Y)\to C(X)$
topologically of index-finite type. The case of non-branched
coverings corresponds to projective finitely generated modules and
expectations (algebraically) of index-finite type. This allows to
define non-commutative analogues of (branched) coverings.
\end{abstract}

 \maketitle


\section{Introduction}
The purpose of the present paper is to obtain an
appropriate description of branched coverings in
terms of (commutative) C*-algebras and their modules
in such a way that it admits a natural generalization
to a non-commutative setting. In fact, we will obtain
two (closely related to each other) descriptions.

A \emph{branch covering} (in this paper) is a closed and open
continuous surjection of compact Hausdorff spaces $p:Y\to X$ with
a finite bounded number of pre-images $\# p^{-1}(x)$, $x\in X$.
(In Section \ref{sec:covbrancov} we describe some properties of
branch coverings and their equivalent descriptions.)

The main result of the present paper is the following
theorem.

\begin{teo}\label{teo:mainbranchcov}
Suppose, $i:C(X) \to C(Y)$ is an inclusion, where
$X$ and $Y$ are compact Hausdorff spaces. Let $p=i^*$
be its Gelfand dual surjection $p:Y\to X$.
Then the following properties are equivalent:
\begin{enumerate}[\rm 1)]
    \item The surjection $p$ is a branched covering.
    \item Consider $C(Y)$ as a $C(X)$-module with
        respect to the natural action induced by $i$. Then
$C(Y)$ can be equipped with an inner $C(X)$-product
    in such a way that it becomes a $($complete$)$
    Hilbert $C(X)$-module.
    \item It is possible to define a positive unital
    conditional expectation $E:C(Y)\to C(X)$ topologically
    of  index-finite type $($in the sense of \cite{BDH}$)$.
\end{enumerate}
\end{teo}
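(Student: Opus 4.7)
The plan is to establish the cycle $1 \Rightarrow 3 \Rightarrow 2 \Rightarrow 1$. The step $3 \Rightarrow 2$ is the Watatani-type construction: given $E$, I would set $\<f,g\> := E(\bar f g)$, which is automatically $C(X)$-sesquilinear and positive, and non-degenerate by faithfulness of $E$. Topological index-finite type of $E$, in the sense of \cite{BDH}, then forces the induced norm $\|f\|_E := \|E(\bar f f)\|^{1/2}$ to be equivalent to the sup-norm on $C(Y)$, so completeness of the Hilbert $C(X)$-module is inherited from the $C^*$-completeness of $C(Y)$.

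The step $1 \Rightarrow 3$ is the core of the theorem. Given the branched covering $p: Y \to X$, I would introduce a local multiplicity $\mu: Y \to \mathbb{N}$ using the structural results of Section~\ref{sec:covbrancov}: $\mu(y)$ counts the number of pre-images of a generic point near $p(y)$ lying in a small neighborhood of $y$. The (locally constant) function $N(x) := \sum_{y \in p^{-1}(x)} \mu(y)$, which records the generic fiber size, is continuous on $X$, and one sets
\[
E(f)(x) := \frac{1}{N(x)} \sum_{y \in p^{-1}(x)} \mu(y)\, f(y).
\]
The conditional expectation axioms---positivity, $C(X)$-bilinearity, and unitality $E(1) = 1$---are then immediate. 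The crucial analytic point is continuity of $E(f)$: although $\mu$ is individually discontinuous at branch points, its jumps are exactly compensated by the merging of pre-images there, together with continuity of $f$. A quasi-basis certifying topological index-finite type is produced from a finite open cover $\{W_i\}$ of $Y$ on which $p|_{W_i}$ admits continuous local sections, together with a subordinate partition of unity in $C(Y)$.

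For $2 \Rightarrow 1$, I would reverse the construction. A Hilbert $C(X)$-module structure on $C(Y)$ yields a finite frame via a stabilization-type argument combined with the completeness hypothesis, hence finite generation of $C(Y)$ as a $C(X)$-module; by Gelfand duality this uniformly bounds $\#p^{-1}(x)$. Openness of $p$ is recovered from the positivity and non-degeneracy of $\<\cdot,\cdot\>$: for any open $U \subset Y$, pick $f \in C(Y)$ with $\{f > 0\} = U$ (possible by Urysohn on a compact Hausdorff space); then $\{x : \<f, f\>(x) > 0\}$ equals $p(U)$ and is open in $X$, whence $p$ sends opens to opens. Closedness of $p$ is automatic from compactness of $Y$ and Hausdorffness of $X$.

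The principal obstacle is the continuity of $E(f)$ at branch points in the $1 \Rightarrow 3$ step. Naive constant-weight averaging $\bigl(\#p^{-1}(x)\bigr)^{-1}\sum_{y\in p^{-1}(x)} f(y)$ fails in general, as one sees already in $Y = \mathbb{D} \sqcup \mathbb{D} \to \mathbb{D}$ given by $z \mapsto z^2$ on one copy and the identity on the other (a function equal to $1$ on the first copy and $0$ on the second has discontinuous average at the origin). The multiplicity weighting $\mu$ is the required fix. Verifying well-definedness of $\mu$, the locally-constant identity $N(x) = \sum_{y \in p^{-1}(x)} \mu(y)$, and continuity of the weighted pushforward together constitute the central technical work; they rely essentially on all three hypotheses on $p$---openness, closedness, and uniformly bounded finite fibers.
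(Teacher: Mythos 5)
Your cycle $3\Rightarrow 2\Rightarrow 1\Rightarrow 3$ has a sound leg ($3\Rightarrow 2$ is exactly Proposition \ref{prop:FMTZAA}), but the other two legs contain genuine gaps. For $1\Rightarrow 3$, the integer-valued local multiplicity $\mu$ you propose is not well defined, and the ``generic fiber size'' $N(x)$ is not locally constant. Already in Example \ref{ex:reflex_mod1} (Figure \ref{fig:reflex_mod1}) the branch point $y_0$ over $x_0=1/2$ has the property that a generic point $z$ near $x_0$ has one preimage near $y_0$ when $z<1/2$ and two when $z>1/2$; the global fiber cardinality itself jumps from $1$ to $2$ across $x_0$, so there is no consistent value to assign to $\mu(y_0)$ or to $N$ near $x_0$, and your formula for $E$ is simply undefined there. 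This is precisely why Theorem \ref{teo:branched_coverings_inner_product} builds the weights by induction over the strata $\widehat{X}_j$, gluing the local averaging formulas (\ref{eq:def_inn_prod})--(\ref{eq:def_inn_prod2}) on regular neighborhoods with the crude average (\ref{eq:base_induct}) by means of a partition of unity; the resulting weight is real-valued, non-canonical and in general not locally constant (cf.\ Example \ref{ex:two_circles}). Your remark about producing a quasi-basis from local sections is also off target: $p$ has no continuous local sections near branch points, and topological index-finite type needs no quasi-basis at all --- only the pointwise bound $K\,E(f^*f)\ge f^*f$, which follows once the weight is bounded below.

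The $2\Rightarrow 1$ leg rests on a false claim. Completeness of the Hilbert $C(X)$-module $C(Y)$ does \emph{not} yield a finite frame or finite generation: Example \ref{ex:reflex_mod1} is a complete Hilbert $C(X)$-module $C(Y)$ that is countably but not finitely generated (Lemmas \ref{lem:directed_sets} and \ref{lem:subspaces}), and if $C(Y)$ were finitely generated projective then $p$ would be an honest covering by Theorem \ref{teo:criterium_fin_gen_mod2} --- so your argument would ``prove'' that every branched covering is unbranched. The uniform fiber bound and the openness must instead be extracted from the associated conditional expectation, as in Theorem \ref{teo:branched_cov_cond_expectations}: the pointwise positivity of $K\cdot(i\circ E)-\operatorname{id}$ gives $E(f_k)(x)>1/K$ for bump functions at the $n$ preimages of $x$, whence $1=E(\1)(x)>n/K$ bounds $\# p^{-1}(x)$; and openness follows because otherwise $E(f)$ would be discontinuous at a boundary point of $p(V)$. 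Your own openness argument via $\{x:\langle f,f\rangle(x)>0\}=p(U)$ silently uses both this pointwise lower bound (for $\supseteq$) and the ``fiber-wise'' property $f|_{p^{-1}(x)}=0\Rightarrow\langle f,f\rangle(x)=0$ (for $\subseteq$); neither is automatic for an abstract $C(X)$-valued inner product, and the paper has to prove the latter for unital conditional expectations before Theorem \ref{teo:branched_cov_cond_expectations} can be run.
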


\begin{proof}
The implication 1)$\Rightarrow$2) will be proved
in Theorem \ref{teo:branched_coverings_inner_product}.
The implication 3)$\Rightarrow$1) will be proved
in Theorem \ref{teo:branched_cov_cond_expectations}.
The equivalence 2)$\Leftrightarrow$3) is known
(see \cite{FrankKirJOP} and Proposition \ref{prop:FMTZAA}
below).
\end{proof}

This theorem suggests how to quantize branched coverings. More
precisely we can introduce the following definition.

\begin{dfn}\label{dfn:ncbcov}\rm
A \emph{non-commutative branched covering} is
a pair $(B,A)$
consisting of a $C^*$-algebra $B$ and its $C^*$-subalgebra $A$
with common unity, such that one of the following
equivalent (by \cite[Theorem 1]{FrankKirJOP}) conditions
holds.
\begin{enumerate}[\rm 1)]
\item The algebra $B$ may be equipped with an inner $A$-valued
product     in such a way that it becomes a $($complete$)$
    Hilbert $A$-module.
    \item There exists a positive
conditional  expectation $E: B\rightarrow A$ topologically of
index-finite type.
    \end{enumerate}
 \end{dfn}

The above theorem and definition can be specialized
to the case of (non-singular finite-fold) coverings
in the following way.
The most part of the next theorem is known.

\begin{teo}\label{teo:mainhcover}
Suppose, $i:C(X) \to C(Y)$ is an inclusion, where
$X$ and $Y$ are compact Hausdorff spaces. Let $p=i^*$
be its Gelfand dual surjection $p:Y\to X$.
Then the following properties are equivalent:
\begin{enumerate}[\rm 1)]
    \item The surjection $p$ is a finite-fold covering.
    \item  The
    module $C(Y)$ may be equipped with an inner $C(X)$-product
    in such a way that it becomes a finitely generated
    projective Hilbert $C(X)$-module.
    \item It is possible to define a positive unital
    conditional expectation $E:C(Y)\to C(X)$ $($algebraically$)$
    of index-finite type $($in the sense of \cite{Wata}$)$.
\end{enumerate}
\end{teo}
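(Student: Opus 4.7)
My strategy is to deduce Theorem~\ref{teo:mainhcover} from Theorem~\ref{teo:mainbranchcov} combined with Watatani's algebraic theory of index-finite conditional expectations. The equivalence $2)\Leftrightarrow 3)$ is the content of \cite{Wata}: a positive unital conditional expectation is algebraically of index-finite type precisely when it admits a finite quasi-basis, which in turn is equivalent to the underlying Hilbert module being finitely generated projective. Thus the substantive task is $1)\Leftrightarrow 3)$; the refinement over Theorem~\ref{teo:mainbranchcov} amounts to promoting ``branched'' to ``non-singular'' on the topological side and ``topologically'' to ``algebraically'' on the expectation side.

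For $1)\Rightarrow 3)$ I plan to exhibit an explicit finite Watatani quasi-basis by hand from a local trivialization. Since $X$ is compact and $p\colon Y\to X$ is a finite-fold covering of degree $n$, choose a finite open cover $\{U_\alpha\}_{\alpha=1}^k$ of $X$ over which $p$ trivializes, so that $p^{-1}(U_\alpha)=\bigsqcup_{j=1}^n V_{\alpha,j}$ with each $p|_{V_{\alpha,j}}\colon V_{\alpha,j}\to U_\alpha$ a homeomorphism, together with a subordinate partition of unity $\{\phi_\alpha\}\subset C(X)$. Set
\[
u_{\alpha,j}(y)=\begin{cases}\sqrt{\phi_\alpha(p(y))}, & y\in V_{\alpha,j},\\ 0, & y\notin V_{\alpha,j}.\end{cases}
\]
Continuity of $u_{\alpha,j}$ on $Y$ is automatic: $V_{\alpha,j}$ is open-and-closed in $p^{-1}(U_\alpha)$, so its boundary in $Y$ lies in $p^{-1}(X\setminus U_\alpha)$, where $\phi_\alpha\circ p$ vanishes. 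Taking the averaging expectation $E(f)(x)=\tfrac1n\sum_{y\in p^{-1}(x)}f(y)$ (positive, unital, $C(X)$-linear), a direct fiberwise computation using $u_{\alpha,j}u_{\alpha,j'}=0$ for $j\neq j'$ and $\sum_\alpha\phi_\alpha=1$ yields $\sum_{\alpha,j}n\,u_{\alpha,j}E(u_{\alpha,j}f)=f$ for every $f\in C(Y)$, so $\{(\sqrt n\,u_{\alpha,j},\sqrt n\,u_{\alpha,j})\}_{\alpha,j}$ is a finite Watatani quasi-basis and $E$ is algebraically of index-finite type.

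For $3)\Rightarrow 1)$, Theorem~\ref{teo:mainbranchcov} already provides that $p$ is a branched covering. To upgrade to non-singularity I use $3)\Rightarrow 2)$ (Watatani) to make $C(Y)$ a finitely generated projective $C(X)$-module. Writing $C(Y)$ as a direct summand of $C(X)^N$, the submodule $i(\mathfrak m_x)C(Y)$ appears as a direct summand of the closed submodule $\mathfrak m_x C(X)^N=\ker(\mathrm{ev}_x^N)$, hence is closed in $C(Y)$. Its closure always coincides with $J_x=\{f\in C(Y):f|_{p^{-1}(x)}=0\}$ by Gelfand duality (the common zero set of $i(\mathfrak m_x)\cdot C(Y)$ is exactly $p^{-1}(x)$), so in the projective case $i(\mathfrak m_x)C(Y)=J_x$ and the rank of $C(Y)$ at $x$ equals $\dim_\C C(Y)/J_x=\#p^{-1}(x)$. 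Local constancy of the projective rank thus transfers to $\#p^{-1}(x)$. Combined with $p$ being open, closed, and finite-to-one, this yields local triviality: separate $y_1,\dots,y_n\in p^{-1}(x)$ by disjoint open $W_i$; shrinking via openness and closedness of $p$ produces an open $U\ni x$ with $p^{-1}(U)\subseteq\bigsqcup W_i$ and $U\subseteq\bigcap_i p(W_i)$; pigeonhole on the constant fiber count $n$ then forces each $p|_{W_i\cap p^{-1}(U)}\colon W_i\cap p^{-1}(U)\to U$ to be a continuous open bijection, hence a homeomorphism.

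The principal technical hurdle is the identification in step $3)\Rightarrow 1)$ between the algebraic quotient $C(Y)/i(\mathfrak m_x)C(Y)$ and the $C^*$-algebra fiber $C(p^{-1}(x))$: in a general branched setting (e.g.\ $z\mapsto z^2$ at the ramification point) the algebraic ideal is strictly smaller than its closure and the algebraic quotient is too large, but finite generation and projectivity force $i(\mathfrak m_x)C(Y)$ to be closed, which collapses the two notions and delivers the rank equality $\mathrm{rank}\,C(Y)=\#p^{-1}(x)$ on which the remainder of the argument hinges.
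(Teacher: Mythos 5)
Your proof is correct, but the hard direction is argued along a genuinely different route from the paper's. For $2)\Rightarrow 1)$ the paper (Theorem \ref{teo:criterium_fin_gen_mod2}) shows that the fiber-cardinality function is locally constant by contradiction, restricting the module to closed subspaces via Lemmas \ref{lem:subspaces} and \ref{lem:subspaces2} and then invoking the two ``counterexample'' Lemmas \ref{lem:directed_sets} (two branches merging over a net destroys finite generation) and \ref{lem:sequence} (a fiber collapsing onto a single point destroys projectivity); it then needs connectedness of $X$ to make the count globally constant. You instead identify $\#p^{-1}(x)$ with the Serre--Swan rank $\dim_\C C(Y)/i(\mathfrak m_x)C(Y)$: the key observation that projectivity forces $i(\mathfrak m_x)C(Y)$ to be closed, hence equal to $J_x=\{f: f|_{p^{-1}(x)}=0\}$, is exactly right (writing $C(Y)=PC(X)^N$ gives $i(\mathfrak m_x)C(Y)=\mathfrak m_x^N\cap \operatorname{im}P$), and local constancy of the rank then comes for free. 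This is more conceptual, works without the connectedness hypothesis that the paper's Theorem \ref{teo:criterium_fin_gen_mod2} imposes (and which the statement of Theorem \ref{teo:mainhcover} does not), and pinpoints precisely why branch points obstruct projectivity. For $1)\Rightarrow 3)$ you reconstruct explicitly the quasi-basis that the paper obtains by citing \cite[Proposition 2.8.9]{Wata}; both treat $2)\Leftrightarrow 3)$ by the same citation. Two small points you should tighten: continuity of $u_{\alpha,j}$ requires $\phi_\alpha$ to vanish on a \emph{neighborhood} of $\partial V_{\alpha,j}$, which follows from $\supp\phi_\alpha\subset U_\alpha$ rather than from vanishing on the boundary itself; and your appeal to Theorem \ref{teo:mainbranchcov} to get openness of $p$ needs either the standard fact that algebraic index-finiteness implies topological index-finiteness, or (more cheaply) the remark that a finitely generated projective Hilbert module is automatically complete, so condition 2) of Theorem \ref{teo:mainbranchcov} applies directly. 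Neither is a gap.
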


\begin{proof}
The implication 1)$\Rightarrow$3) is proved
in \cite[Proposition 2.8.9]{Wata}.
The implication 2)$\Rightarrow$1) will be proved
in Theorem \ref{teo:criterium_fin_gen_mod2}.
The equivalence 2)$\Leftrightarrow$3)
can be extracted from \cite[pp. 92--93]{Wata}
(see Theorem \ref{teo:cond_expect}
below).
\end{proof}

\begin{dfn}\label{dfn:nccov}\rm
A \emph{non-commutative covering} is a pair $(B,A)$ consisting of
a $C^*$-algebra $B$ and its $C^*$-subalgebra $A$ with common
unity, such that one of the following equivalent (this may be
extracted from \cite[pp. 92--93]{Wata}, see Theorem
\ref{teo:cond_expect} below) conditions holds.
\begin{enumerate}[\rm 1)]
\item The algebra $B$ may be equipped with an inner $A$-valued
product     in such a way that it becomes a finitely generated
projective Hilbert $A$-module.
    \item There exists a positive
conditional  expectation $E: B\rightarrow A$ algebraically of
index-finite type.
    \end{enumerate}
 \end{dfn}

Our research continues the research on spaces and modules arising
from discrete group actions  (cf. \cite{FMTZAA,TroPAMS}). Apart
from the mentioned above papers let us indicate the research of
Buchstaber, Rees, Gugnin and others on algebraic definition and
topological applications of Dold-Smith ramified coverings (see,
e.g., \cite{BR02, BR08, Gug05}).

A number of known as well as of  new facts about branched
coverings are collected in Section \ref{sec:covbrancov}.
Preliminaries on Hilbert modules, basic lemmas and examples are
contained in Section \ref{sec:hilbmodcov}. Section
\ref{sec:branchmodu} deals mostly with Hilbert module aspects of
proofs of the main theorems. At its end a couple of related
statements concerning other types of Hilbert modules is proved.
Section \ref{sec:branchcovexpect} is devoted to conditional
expectations and to the corresponding parts of proofs. At the end
of the section the role of index elements is discussed.

{\bf Acknowledgement:}
The authors are grateful to M.~Frank, V.~M.~Manuilov,
A.~S.~Mish\-che\-n\-ko and T.~Schick for helpful discussions.
In particular, A.~S.~Mishchenko (after a talk of one of us
(ET) on the results of \cite{TroPAMS})
has posed some of the problems solved in the present paper.

\section{Branched coverings}\label{sec:covbrancov}

In this section we present (mostly known) statements about
continuous surjections of Hausdorff spaces.
Let
\begin{equation}\label{eq:p_Y_X}
p : Y\rightarrow X
\end{equation}
be a continuous surjection of compact Hausdorff spaces,
in particular, a closed map.

\begin{dfn}\label{dfn:regular_neighborhood}
{\rm Let us consider the map (\ref{eq:p_Y_X}) and a certain point
$x$ of $X$, which has a finite number of pre-images $y_1,
\dots,y_m$. Then a neighborhood $U$ of $x$ is said to be
\emph{regular} if
\begin{gather}\label{eq:neiborhoods}
p^{-1}(U)= V_1\sqcup\dots\sqcup V_m,
\end{gather}
where $V_i$ are some neighborhoods of $y_i$, $i=1,\dots, m$.}
\end{dfn}

\begin{lem}\label{lem:neiborhoods}
Let $p : Y\rightarrow X$ be a continuous closed map of Hausdorff
spaces. Then any point $x$ of $X$ with a finite number of
pre-images has a regular neighborhood.
\end{lem}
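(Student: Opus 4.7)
The plan is to separate the finitely many preimages $y_1,\dots,y_m$ by disjoint open neighborhoods inside $Y$, then use the closedness of $p$ to cut down to a neighborhood of $x$ in $X$ whose preimage stays inside their union.

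More precisely, since $Y$ is Hausdorff and $y_1,\dots,y_m$ are finitely many distinct points, an elementary induction on $m$ yields pairwise disjoint open sets $W_1,\dots,W_m\subset Y$ with $y_i\in W_i$. Set
\[
C := Y\setminus (W_1\cup\dots\cup W_m),
\]
which is closed in $Y$. Because $p$ is a closed map, $p(C)$ is closed in $X$. By construction $p^{-1}(x)=\{y_1,\dots,y_m\}$ is contained in $W_1\cup\dots\cup W_m$, so $x\notin p(C)$. Therefore
\[
U := X\setminus p(C)
\]
is an open neighborhood of $x$.

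Now define $V_i := W_i\cap p^{-1}(U)$. Each $V_i$ is open and contains $y_i$ (since $y_i\in W_i$ and $p(y_i)=x\in U$). The $V_i$ are pairwise disjoint because the $W_i$ are. Finally, any $y\in p^{-1}(U)$ satisfies $p(y)\notin p(C)$, hence $y\notin C$, so $y$ lies in some $W_i$, and therefore $y\in V_i$. This gives $p^{-1}(U)=V_1\sqcup\dots\sqcup V_m$, as required.

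The only delicate point is arranging that the open set $U$ picked in $X$ is small enough that its preimage does not escape the chosen neighborhoods of the $y_i$; this is exactly where the closedness of $p$ is used, to ensure that $p(C)$ is closed and can be removed from $X$. No compactness or countability hypothesis is needed beyond what is stated, so this is the main (and really the only) obstacle, and it is handled cleanly by the closed-map property.
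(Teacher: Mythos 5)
Your proof is correct and is essentially identical to the paper's own argument: separate the preimages by disjoint open sets, remove the closed image of their complement from $X$, and intersect with $p^{-1}(U)$. You simply spell out the verification that $p^{-1}(U)\subset W_1\cup\dots\cup W_m$ in more detail than the paper does.
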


\begin{proof} Suppose the pre-image of $x$ consists of points $y_1,\dots,y_m$.
These points can be separated by pairwise disjoint neighborhoods
$V'_1, \ldots ,V'_m$. Then the set $U=X\setminus
p(Y\setminus\sqcup_{i=1}^m V'_i)$ is an open neighborhood of $x$,
because the map $p$ is closed. Now one can set $V_i=p^{-1}(U)\cap
V_i'$.
\end{proof}

\begin{lem}\label{lem:neiborhoods2}
Let $p : Y\rightarrow X$ be a continuous closed map of Hausdorff
spaces. Suppose $U$ is a regular neighborhood of a point $x\in X$
satisfying {\rm(\ref{eq:neiborhoods})} and $U'$ is an open set
satisfying the condition: $x\in U'\subset\overline{U'}\subset U$.
Put $V_i'=p^{-1}(U')\cap V_i$. Then
\begin{enumerate}
    \item $p^{-1}(U')=\sqcup _{i=1}^mV_i';$
    \item $\overline{V_i'}=p^{-1}(\overline{U'})\cap V_i;$
    \item $p^{-1}(\overline{U'})=\sqcup _{i=1}^m\overline{V_i'}.$
\end{enumerate}
\end{lem}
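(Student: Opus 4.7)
The plan is to verify the three identities in order, noting that (1) is routine, (3) will follow cleanly from (2), and the bulk of the work lies in (2). For (1), the inclusion $U' \subset U$ gives $p^{-1}(U') \subset p^{-1}(U) = V_1 \sqcup \cdots \sqcup V_m$, and intersecting this disjoint decomposition with $p^{-1}(U')$ yields the required partition $p^{-1}(U') = \sqcup_j V_j'$ directly from the definition $V_j' := p^{-1}(U') \cap V_j$.

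For the inclusion $\overline{V_i'} \subset p^{-1}(\overline{U'}) \cap V_i$ in (2) I combine two easy observations: continuity of $p$ makes $p^{-1}(\overline{U'})$ closed and containing $V_i'$, hence containing $\overline{V_i'}$; and $\overline{V_i'} \subset p^{-1}(\overline{U'}) \subset p^{-1}(U) = \sqcup_j V_j$, together with the fact that each $V_j$ with $j \ne i$ is open and disjoint from $V_i \supset V_i'$ (so contains no limit point of $V_i'$), forces $\overline{V_i'} \subset V_i$.

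The reverse inclusion of (2) is the crux. For $y \in V_i \cap p^{-1}(\overline{U'})$ and an arbitrary open neighborhood $W$ of $y$ in $Y$, which I may shrink so $W \subset V_i$, I must produce a point of $V_i'$ inside $W$. The strategy is to form the open set $\widetilde{W} := W \cup \sqcup_{j \ne i} V_j$ and exploit the closedness of $p$: the set $p(Y \setminus \widetilde{W})$ is closed, so $N := X \setminus p(Y \setminus \widetilde{W})$ is open and equal to $\{x' \in X : p^{-1}(x') \subset \widetilde{W}\}$. Once $p(y) \in N$ is established, $N$ is an open neighborhood of $p(y) \in \overline{U'}$, so $N \cap U' \neq \emptyset$; any $x' \in N \cap U'$ then satisfies $p^{-1}(x') \subset \widetilde{W}$ and also $p^{-1}(x') \subset p^{-1}(U) = \sqcup_j V_j$, which forces every pre-image of $x'$ in $V_i$ into $W$, yielding the desired point of $W \cap V_i'$. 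The main obstacle is the verification $p^{-1}(p(y)) \subset \widetilde{W}$: this is automatic for pre-images of $p(y)$ in $V_j$ with $j \ne i$, but any additional pre-images of $p(y)$ inside $V_i$ beyond $y$ must be accommodated, which I would handle at the outset by Hausdorff-separating the finitely many such extra pre-images from $y$ and enlarging $W$ by small disjoint open sets around each of them before forming $\widetilde{W}$.

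Finally, (3) is a quick consequence: since $p^{-1}(\overline{U'}) \subset p^{-1}(U) = \sqcup_j V_j$, we obtain $p^{-1}(\overline{U'}) = \sqcup_j (p^{-1}(\overline{U'}) \cap V_j) = \sqcup_j \overline{V_j'}$ by (2), and the disjointness of the closures $\overline{V_j'}$ is guaranteed by the inclusion $\overline{V_j'} \subset V_j$ established in (2) together with the pairwise disjointness of the $V_j$.
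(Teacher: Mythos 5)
Your handling of (i), of the inclusion $\overline{V_i'}\subset p^{-1}(\overline{U'})\cap V_i$ in (ii), and of the deduction of (iii) from (ii) is correct, and is already more detailed than the paper's own proof, which consists of the three sentences ``the first statement is obvious; the second is true because the map $p$ is closed; the third follows from the second.'' The trouble is the reverse inclusion $p^{-1}(\overline{U'})\cap V_i\subset\overline{V_i'}$, which you rightly identify as the crux and where your argument has a genuine gap. Two things go wrong. First, the set of ``extra'' pre-images of $p(y)$ in $V_i$ need not be finite: only the distinguished point $x$ is assumed to have finitely many pre-images, not the nearby point $p(y)\in\overline{U'}$. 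Second, and more seriously, your proposed repair defeats the purpose of the argument: after enlarging $W$ to some $W^{+}$ by adding open sets around the other pre-images of $p(y)$ in $V_i$, the closed-map trick only yields a point $x'\in N\cap U'$ with $p^{-1}(x')\subset W^{+}\cup\bigsqcup_{j\neq i}V_j$, and the pre-images of $x'$ lying in $V_i$ may all sit in $W^{+}\setminus W$ (or $p^{-1}(x')$ may miss $V_i$ entirely --- nothing in the hypotheses forces $p|_{V_i}$ to hit $U'$; that is exactly what Proposition \ref{prop:surj_reg_neighb} provides, but only for branched coverings). So no point of $V_i'$ inside the original neighborhood $W$ of $y$ is produced.

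This gap cannot be closed under the stated hypotheses, because the inclusion in question is false for general continuous closed maps. Take $X=[-1,1]$, $Y=[-1,1]\sqcup\{q\}$, with $p$ the identity on the interval and $p(q)=1/2$: this is a continuous closed surjection of compact Hausdorff spaces, $U=(-1,1)$ is a regular neighborhood of $x=0$ with $m=1$ and $V_1=p^{-1}(U)$ in (\ref{eq:neiborhoods}), and for $U'=(-1/2,1/2)$ one has $q\in p^{-1}(\overline{U'})\cap V_1$ while $\overline{V_1'}=[-1/2,1/2]$ does not contain the isolated point $q$; hence (ii) and (iii) both fail. What rescues the lemma in the situations where it is actually needed is openness of $p$: if $p$ is open, then for $y\in V_i$ with $p(y)\in\overline{U'}$ and any open $W\ni y$ with $W\subset V_i$, the image $p(W)$ is an open neighborhood of $p(y)$ and therefore meets $U'$, which immediately gives a point of $W\cap p^{-1}(U')\cap V_i=W\cap V_i'$ --- no closed-map argument is required. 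The honest fix is thus to add openness of $p$ (equivalently, by Theorem \ref{teo:openbranched}, the branched-covering hypothesis) to the statement and prove (ii) by this one-line argument; note that the paper's own one-sentence justification does not address this difficulty either.
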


\begin{proof} The first statement is obvious. The second is
true because the map $p$ is closed. And the third immediately
follows from the second.
\end{proof}

Given the covering (\ref{eq:p_Y_X}). Denote by $X_j$ the subset
(stratum) of
$X$ consisting of points that have exactly $j$ pre-images and
reserve the notation $\widehat{X}_j$ for the union
$\bigcup_{i=0}^j X_i$, $j\ge 0$.

 Now for any point $x$ of $X$ consider the collection
(\ref{eq:neiborhoods}) of neighborhoods $U, V_1,\dots, V_m$, where
$m=\#p^{-1}(x).$ Then
\begin{gather*}
    X(x,U)_j^{(k)}=\{z\in U : \#p^{-1}(z)_k=j\},
\end{gather*}
where $p^{-1}(z)_k=p^{-1}(z)\cap V_k$, and
$\widehat{X}(x,U)_j^{(k)}$ stands for the union $\bigcup_{i=0}^j
X(x,U)_i^{(k)}$.

\begin{dfn}\label{dfn:branched_coverings}
{\rm The map (\ref{eq:p_Y_X}) is said to be a \emph{branched
covering} if both $X$ and $Y$ are  compact Hausdorff spaces, $p$
is a continuous surjective map (in particular, closed) and the
following conditions hold:
\begin{enumerate}
\item
$\widehat{X}(x,U)_j^{(k)}$ is a closed subset of
$\widehat{X}(x,U)_{j+1}^{(k)}$ for any point $x$ of $X$, some its
neighborhood $U$ satisfying (\ref{eq:neiborhoods}) and for all
$k=1,\dots,m$, $j=0,1,2,\dots$,
\item
the cardinalities of the pre-images $p^{-1}(x)$ are uniformly
bounded over $x\in X$.
    \end{enumerate}
    }
\end{dfn}

A finite-fold covering $p : Y\rightarrow X$ of connected compact
spaces, obviously, satisfies the conditions (i), (ii) of
Definition \ref{dfn:branched_coverings}, so it is a particular
case of a branched covering.

\begin{prop}\label{prop:strata}
 Let $p : Y\rightarrow X$ be a branched covering. Then the stratum
 $\widehat{X}_j$ is closed in the next stratum $\widehat{X}_{j+1}$
 for all $j\ge 0$.
\end{prop}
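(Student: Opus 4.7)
The plan is to show the complementary fact: $X_{j+1}=\widehat X_{j+1}\setminus\widehat X_j$ is open in $\widehat X_{j+1}$. Concretely, given an arbitrary $x\in X_{j+1}$ I will produce an open neighborhood $\tilde V$ of $x$ in $X$ such that $\tilde V\cap\widehat X_{j+1}\subset X_{j+1}$.

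First I would fix $x\in X_{j+1}$, enumerate its (exactly $j+1$) pre-images as $y_1,\dots,y_{j+1}$, and apply Lemma~\ref{lem:neiborhoods} to obtain a regular neighborhood $U$ of $x$ with $p^{-1}(U)=V_1\sqcup\cdots\sqcup V_{j+1}$ and $y_k\in V_k$. Since $y_k$ is the only pre-image of $x$ in $V_k$, we have $\#p^{-1}(x)_k=1$ for each $k$, i.e.\ $x\in X(x,U)_1^{(k)}\subset\widehat X(x,U)_1^{(k)}$ and $x\notin\widehat X(x,U)_0^{(k)}$.

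The key input is now condition (i) of Definition~\ref{dfn:branched_coverings} applied with $j'=0$: the set $A_k:=\widehat X(x,U)_0^{(k)}$ is closed in $B_k:=\widehat X(x,U)_1^{(k)}$. Since the topology on $B_k$ is the subspace topology from $X$, closedness in $B_k$ means $A_k=\overline{A_k}^{X}\cap B_k$. Because $x\in B_k\setminus A_k$, this forces $x\notin\overline{A_k}^{X}$, so there is an open neighborhood $\tilde V_k\subset U$ of $x$ in $X$ disjoint from $A_k$, i.e.\ satisfying $\#p^{-1}(z)_k\ge 1$ for every $z\in\tilde V_k$.

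Setting $\tilde V:=\bigcap_{k=1}^{j+1}\tilde V_k$, every $z\in\tilde V$ satisfies $\#p^{-1}(z)_k\ge 1$ for all $k$, hence $\#p^{-1}(z)=\sum_{k=1}^{j+1}\#p^{-1}(z)_k\ge j+1$. If in addition $z\in\widehat X_{j+1}$, then $\#p^{-1}(z)\le j+1$ and so equality holds; therefore $z\in X_{j+1}$. This proves $\tilde V\cap\widehat X_{j+1}\subset X_{j+1}$, which is exactly what we needed.

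The only slightly delicate point is the passage from closedness of $A_k$ in the \emph{relative} topology of $B_k$ to the existence of an honest $X$-neighborhood of $x$ avoiding $A_k$; this works because $A_k\subset B_k$ and $x\in B_k$, so the relative complement can be realized by an open set in $X$. Everything else (finite intersection, pigeonhole on the $j+1$ summands) is formal.
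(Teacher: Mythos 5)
Your proof is correct, and it takes a genuinely different route from the paper's. The paper proves closedness of $\widehat X_j$ directly: inside a regular neighborhood $U$ of an arbitrary point it identifies $\widehat X_j\cap U$ with a finite Boolean combination (a union over tuples $j_1+\dots+j_m\le j$ of the intersections $\widehat X(x,U)_{j_1}^{(1)}\cap\dots\cap\widehat X(x,U)_{j_m}^{(m)}$), deduces relative closedness in $\widehat X_{j+1}\cap U$ from condition (i) for all the indices involved, and then uses compactness of $X$ to globalize. You instead prove the complementary statement that $X_{j+1}$ is open in $\widehat X_{j+1}$: this needs no globalization step at all (openness is local), uses only the single instance $j'=0$ of condition (i) at points of $X_{j+1}$, and replaces the combinatorial bookkeeping by the one-line count $\#p^{-1}(z)=\sum_{k}\#p^{-1}(z)_k\ge j+1$ combined with the bound $\#p^{-1}(z)\le j+1$ on $\widehat X_{j+1}$. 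Your version is arguably cleaner and uses strictly less of the hypothesis. One small point of hygiene: condition (i) of Definition \ref{dfn:branched_coverings} asserts the closedness of $\widehat X(x,U)_0^{(k)}$ in $\widehat X(x,U)_1^{(k)}$ only for \emph{some} regular neighborhood $U$ of $x$, not for every regular neighborhood produced by Lemma \ref{lem:neiborhoods}; so you should take $U$ to be the neighborhood supplied by the definition (or observe that the property is inherited by smaller regular neighborhoods $U'\subset U$ with $V_k'=V_k\cap p^{-1}(U')$). With that substitution the argument is complete.
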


\begin{proof} Consider any point $x\in X$
and its regular neighborhood $U$ satisfying
(\ref{eq:neiborhoods}). Then for any $j$ the set
$\widehat{X}_j\cap U$ coincides with the following finite
intersection of the sets
$$
\widehat{X}(x,U)_{j_1}^{(1)}\cap\dots\cap\widehat{X}
(x,U)_{{j_m}}^{(m)}
$$
over $j_1+\dots +j_m \le j$. In particular, the set
$\widehat{X}_j\cap U$ is closed in $\widehat{X}_{j+1}\cap U$.
Since $X$ is compact, $\widehat{X}_j$ is closed in
$\widehat{X}_{j+1}$ as well.
\end{proof}

\begin{prop}\label{prop:surj_reg_neighb}
 Let $p : Y\rightarrow X$ be a branched covering. Then
for any point $x$ of $X$ there is its regular neighborhood such
that the restriction of $p$ on $V_k$ is surjective for any
$k=1,\dots,m$.
\end{prop}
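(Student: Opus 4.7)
The plan is to start with the regular neighborhood $U_0$ of $x$ provided by Lemma \ref{lem:neiborhoods}, so that $p^{-1}(U_0) = V_1 \sqcup \dots \sqcup V_m$ and $y_k \in V_k$ where $\{y_1, \dots, y_m\} = p^{-1}(x)$. The failure of $p|_{V_k}$ to be surjective onto $U_0$ is detected precisely by the set $\widehat{X}(x,U_0)_0^{(k)} = U_0 \setminus p(V_k)$, so the idea is to shrink $U_0$ by removing these bad sets: set $U := U_0 \setminus \bigcup_{k=1}^m \widehat{X}(x,U_0)_0^{(k)}$ and check that $U$ is still an open neighborhood of $x$, that it inherits regularity from $U_0$, and that over $U$ each $V_k$-piece does surject.

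The first and most substantive step will be to prove that each $\widehat{X}(x,U_0)_0^{(k)}$ is closed in $U_0$; this is where both conditions of Definition \ref{dfn:branched_coverings} enter. The uniform bound (ii) provides some $N$ with $\#p^{-1}(z) \le N$ for all $z$, and in particular $\widehat{X}(x,U_0)_N^{(k)} = U_0$. Condition (i) says $\widehat{X}(x,U_0)_j^{(k)}$ is closed in $\widehat{X}(x,U_0)_{j+1}^{(k)}$ for every $j$. Since being a closed subset is transitive along a finite chain of subspaces, this lets me conclude that $\widehat{X}(x,U_0)_0^{(k)}$ is closed in $\widehat{X}(x,U_0)_N^{(k)} = U_0$.

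Once closedness is in hand, the remaining verifications are essentially bookkeeping. Because $y_k \in V_k$ lies above $x$, we have $x \notin \widehat{X}(x,U_0)_0^{(k)}$ for every $k$, so the finite intersection defining $U$ is open and contains $x$. Setting $V_k' := V_k \cap p^{-1}(U)$, the inclusion $p^{-1}(U) \subset p^{-1}(U_0) = V_1 \sqcup \dots \sqcup V_m$ immediately gives $p^{-1}(U) = V_1' \sqcup \dots \sqcup V_m'$, with each $V_k'$ an open neighborhood of $y_k$, so $U$ is a regular neighborhood. Finally, for any $z \in U$ and any $k$ we have $z \notin \widehat{X}(x,U_0)_0^{(k)}$, which by definition means $p^{-1}(z) \cap V_k \neq \emptyset$; any point in this intersection lies in $V_k'$ and maps to $z$, establishing surjectivity of $p|_{V_k'}$ onto $U$.

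The main obstacle is the closedness argument in the first step: it is the one place where the rather technical axiom (i) of Definition \ref{dfn:branched_coverings} does genuine work, and it relies essentially on the uniform bound (ii) to truncate what would otherwise be an infinite ascending chain. Everything afterwards is formal manipulation of the disjoint decomposition.
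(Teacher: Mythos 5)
Your proof is correct, but it is organized quite differently from the paper's. The paper argues by contradiction: assuming no regular neighborhood works, it produces a net $\{x_\alpha\}$ converging to $x$ and lying in $\widehat{X}(x,U)_0^{(k)}$ for a fixed $k$, while $x$ itself lies in $X(x,U)_1^{(k)}$; this violates the single instance $j=0$ of condition (i) of Definition \ref{dfn:branched_coverings}. You instead build the good neighborhood explicitly by deleting the sets $\widehat{X}(x,U_0)_0^{(k)}=U_0\setminus p(V_k)$, and for that you need them to be closed in all of $U_0$, which forces you to run the whole chain $\widehat{X}_0^{(k)}\subset\widehat{X}_1^{(k)}\subset\dots\subset\widehat{X}_N^{(k)}=U_0$ and to invoke the uniform bound (ii) to terminate it. Both arguments rest on the same axiom; yours uses more of it, but in exchange it is constructive, it avoids the net bookkeeping that the paper leaves implicit (its ``one can find a net'' really requires choosing points in a shrinking base of neighborhoods and passing to a subnet on which $k$ is constant), and it hands you $U$ and the pieces $V_k'$ explicitly. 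One small correction: condition (i) guarantees the closedness of the chain only for \emph{some} regular neighborhood of $x$, not for every one, so you should take $U_0$ to be that particular neighborhood rather than an arbitrary one supplied by Lemma \ref{lem:neiborhoods}; since the proposition's conclusion is existential, this costs nothing. With that adjustment, the remaining bookkeeping ($U$ open and containing $x$ because $y_k\in V_k\cap p^{-1}(x)$; $p^{-1}(U)=\sqcup_k V_k'$; surjectivity of $p|_{V_k'}$ onto $U$) is all correct.
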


\begin{proof} Let us suppose  the opposite is true. Then for
some point $x$ of $X$ and for some its regular neighborhood
(\ref{eq:neiborhoods})  one can find a net $\{x_\alpha\}$
converging to $x$ and such that the intersection of its pre-image
with $V_k$ is empty for some $k$. In the other words,
$\{x_\alpha\}$ belongs to $\widehat{X}(x,U)_0^{(k)}$, whereas $x$
lies in $X(x,U)_1^{(k)}$. This contradicts to the condition {\rm
(i)} of Definition \ref{dfn:branched_coverings}.
\end{proof}

\begin{dfn}\label{dfn:locepi}\rm
A map $f:Y\to X$ is said to be a \emph{local epimorphism}
if for any $y\in Y$ and any its neighborhood $U\ni y$
there exists another neighborhood $U_y\ss U$ such that
$ {V_x}:=f(U_y)$ is an (open) neighborhood of $x=f(y)$.
\end{dfn}

\begin{lem}\label{lem:openlocalepim}
A map $f:Y\to X$ is a local epimorphism
if and only if it is an open map.
\end{lem}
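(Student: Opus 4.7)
The plan is to verify the two implications directly from the definitions, after first noting a mild reformulation: without loss of generality we may restrict attention to \emph{open} neighborhoods $U\ni y$ in the definition of local epimorphism, since every neighborhood contains an open one and the condition is monotone in $U$.

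For the direction ``open $\Rightarrow$ local epimorphism,'' I would take $y\in Y$ and an open neighborhood $U\ni y$, and simply set $U_y=U$. Since $f$ is open, $f(U_y)=f(U)$ is open, and it contains $f(y)=x$, so it is an open neighborhood of $x$. This gives the local epimorphism property with no real work.

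For the converse, I would let $W\subset Y$ be open and show $f(W)$ is a neighborhood of each of its points. Given $x\in f(W)$, choose $y\in W$ with $f(y)=x$. Applying the local epimorphism condition with the neighborhood $U=W$ of $y$ yields a neighborhood $U_y\subset W$ of $y$ such that $V_x=f(U_y)$ is an open neighborhood of $x$. Since $U_y\subset W$ we have $V_x\subset f(W)$, so $f(W)$ contains an open neighborhood of each of its points and is therefore open.

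There is no serious obstacle here; the statement is essentially an unraveling of definitions, and the only subtle point is the legitimate reduction to open neighborhoods in Definition \ref{dfn:locepi}, which is immediate since any neighborhood of $y$ contains an open one. The lemma is presumably stated to allow later arguments to freely interchange the two notions.
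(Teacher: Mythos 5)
Your proof is correct and follows essentially the same route as the paper: the ``open $\Rightarrow$ local epimorphism'' direction is the trivial choice $U_y=U$ (the paper does exactly this, while you add the harmless and in fact slightly more careful reduction to open neighborhoods), and the converse is the standard covering argument, which you phrase as ``$f(W)$ contains an open neighborhood of each of its points'' where the paper writes $f(U)=\bigcup_{y\in U}f(U_y)$ as a union of open sets. No gaps.
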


\begin{proof}
`If' is evident: take $U_y=U$.

Now let $f$ be a local epimorphism and $U\ss Y$
be an arbitrary open set. For each $y\in U$ find $U_y$
and $V_x$ in accordance with Definition \ref{dfn:locepi}.
Then
$$
f(U)=f\left(\cup_{y\in U} U_y\right)=
\cup_{y\in U}f\left( U_y\right)=\cup_{y\in U}V_{ {f(y)}}
$$
is open.
\end{proof}

\begin{teo}\label{teo:openbranched}
Consider a surjective map $p:Y\to X$ of compact Hausdorff
spaces with uniformly bounded number of pre-images, i.e.
$$
\sup_{x\in X} \# p^{-1}(x)=m<\infty.
$$
Then $f$ is a branched covering if and only if it is open.
\end{teo}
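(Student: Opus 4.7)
The plan is to prove the two implications separately, with openness as the target in one direction and conditions (i)--(ii) of Definition \ref{dfn:branched_coverings} in the other. Condition (ii) and the compact/Hausdorff/continuous/surjective hypotheses are shared, so effectively I need: (a) branched $\Rightarrow$ local epimorphism (hence open, by Lemma \ref{lem:openlocalepim}); (b) open $\Rightarrow$ condition (i).

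For (a), I would start from $y\in Y$ with an arbitrary open neighborhood $O\ni y$, set $x=p(y)$, and invoke Proposition \ref{prop:surj_reg_neighb} to obtain a regular neighborhood $W$ of $x$ with $p^{-1}(W)=V_1\sqcup\cdots\sqcup V_m$, $y\in V_k$, and $p|_{V_i}$ surjective onto $W$ for every $i$. Shrinking $W$ by Lemma \ref{lem:neiborhoods2} I may assume the closures $\overline{V_i}$ are pairwise disjoint. The set $K:=p\bigl(\overline{V_k\cap(Y\setminus O)}\bigr)$ is closed (continuous image of a compact set in a Hausdorff space), and $x\notin K$: the preimages $y_j$ of $x$ with $j\neq k$ lie in $\overline{V_j}$, disjoint from $\overline{V_k}$, while $y_k=y\in O$ is not in $\overline{Y\setminus O}$. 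Then $W':=W\setminus K$ is an open neighborhood of $x$, and I would take $U_y:=V_k\cap p^{-1}(W')$, which is an open neighborhood of $y$. It is contained in $O$, since $z\in U_y\setminus O$ would force $p(z)\in K\cap W'=\emptyset$; and $p(U_y)=W'$ because $p|_{V_k}$ is surjective onto $W$ and $W'\subset W$. This exhibits $p$ as a local epimorphism.

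For (b), I would verify condition (i) of Definition \ref{dfn:branched_coverings} via an upper-semicontinuity argument. Note first that continuity plus compactness of $Y$ and Hausdorffness of $X$ imply $p$ is closed, so regular neighborhoods exist by Lemma \ref{lem:neiborhoods}. Fix $x\in X$, a regular neighborhood $U$ with $p^{-1}(U)=V_1\sqcup\cdots\sqcup V_m$, and an index $k$. Define $n_k(z):=\#\bigl(p^{-1}(z)\cap V_k\bigr)$; the conclusion to prove is that $\{z\in U:n_k(z)\leq j\}$ is closed in $\{z\in U:n_k(z)\leq j+1\}$. Suppose $n_k(z)\geq j+1$ and pick distinct $w_1,\dots,w_{j+1}\in p^{-1}(z)\cap V_k$ together with pairwise disjoint open neighborhoods $N_i\subset V_k$ of $w_i$. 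By openness of $p$, each $p(N_i)$ is open, so $\bigcap_i p(N_i)$ is an open neighborhood of $z$ on which $n_k\geq j+1$. Hence $\{n_k\leq j\}$ is closed in $U$, which is exactly condition (i).

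The main obstacle is the first direction: one has to shrink the regular neighborhood to achieve \emph{both} that the sheets have disjoint closures \emph{and} that the part of the chosen sheet lying outside $O$ maps away from $x$. The disjoint-closures refinement from Lemma \ref{lem:neiborhoods2} is what makes the ``avoidance'' argument work, since it guarantees that the only candidate preimage of $x$ lying in $\overline{V_k}$ is $y$ itself, which by choice of $O$ is safely interior to $O$. The converse direction is essentially a one-line consequence of openness applied to disjoint neighborhoods of distinct preimages.
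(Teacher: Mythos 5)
Your proof is correct, and both implications rest on the same pillars as the paper's: Proposition \ref{prop:surj_reg_neighb} together with Lemma \ref{lem:openlocalepim} for ``branched $\Rightarrow$ open'', and the reformulation of condition (i) as closedness of the sublevel sets of $n_k(z)=\#\bigl(p^{-1}(z)\cap V_k\bigr)$ for the converse. The execution differs in two places, and in both your version is the more complete one. For ``branched $\Rightarrow$ open'' the paper argues by contradiction with a net $x_i\in U\setminus H$ and asserts that chosen preimages $y_i\in p^{-1}(x_i)\cap V_k$ converge to $y$; that assertion silently uses compactness and the fact that $y$ is the \emph{only} preimage of $x$ in $\overline{V_k}$, which is exactly the disjoint-closures refinement you import explicitly from Lemma \ref{lem:neiborhoods2} before running your direct avoidance construction with the closed set $K=p\bigl(\overline{V_k\cap(Y\setminus O)}\bigr)$. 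For ``open $\Rightarrow$ branched'' the paper's contradiction argument (producing a point $x''\in U$ with no preimage in $V'_k$) really only addresses the passage from $j=0$ to $j=1$; your argument with $j+1$ pairwise disjoint neighborhoods $N_i\subset V_k$ and the open set $\bigcap_i p(N_i)$ handles every $j$ uniformly and yields condition (i) in full. One cosmetic remark: what you establish --- that $\{n_k\ge j+1\}$ is open --- is lower, not upper, semicontinuity of $n_k$; the argument itself is fine.
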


\begin{proof}
 In fact the proof of Proposition \ref{prop:surj_reg_neighb}
may be slightly changed to obtain the `only if' statement. Indeed, for
any point $y\in Y$ with $x=p(y)$ we consider a regular neighborhood
$U$ of $x$ as in Proposition \ref{prop:surj_reg_neighb}. Let $y$ belong to
$V_k$ and $V$ be an arbitrary neighborhood of $y$. Set
$H=p(V\cap V_k)\subset U$, $x\in H$.
Now to make sure that $f$ is a local epimorphism we have
only to verify that $H$ contains some (open) neighborhood of $x$.
But otherwise there is a net $\{x_i\}$ of $U\setminus H$ converging to $x$. Then
the net $y_i:=p^{-1}(x_i)\cap V_k$ converges to $y$ and does
not belong to the (open) neighborhood $V_k\cap V$ of $y$. We come to a
contradiction. Thus $p$ is a local epimorphism and by Lemma
\ref{lem:openlocalepim} it is open.

Now let $p$ be open, hence be a local epimorphism. Suppose,
the first item of Definition \ref{dfn:branched_coverings}
does not hold. This means that for some point $x'\in U$
(may be $x'\ne x$), any its regular neighborhood $U'$
and some its pre-image $y'_k$ we can find a point
$x''\in U$ with no pre-image in $V'_k$. Thus, $p$ is
not a local epimorphism. A contradiction.
\end{proof}

\begin{rk}\label{rk:br_cov_ex}{\rm
If we replace the first condition of Definition
\ref{dfn:branched_coverings}  by the condition that
$\widehat{X}_j$ is closed in  $\widehat{X}_{j+1}$  for all $j\ge
0$, then the statement of Proposition \ref{prop:surj_reg_neighb}
will not be true. The corresponding example is given by Figure
\ref{fig:br_cov_ex},
\begin{figure}[ht]
\begin{picture}(90,90)
\put(0,0){\line(1,0){70}} \put(80,5){$X$}
\put(0,40){\line(1,0){40}} \put(0,40){\line(1,1){40}}
\put(40,40){\line(3,2){30}}
 \put(40,40){\line(3,-2){30}}  \put(80,50){$Y$}
\put(30,30){\vector(0,-1){15}} \put(20,20){$p$}
\end{picture}
 \caption{Remark~\ref{rk:br_cov_ex}}\label{fig:br_cov_ex}
\end{figure}
that differs from Figure
\ref{fig:reflex_mod1} by one additional (closed) interval
ending over the branch point. }
\end{rk}

\section{Projective and finitely generated Hilbert $C^*$-modules.
Examples}\label{sec:hilbmodcov}

For facts on Hilbert $C^*$-modules we refer the reader to
\cite{Lance, MaTroBook, Raeburn-Williams}.  We recall here just
the most important for us statements. For a Hilbert $C^*$-module $M$
over a $C^*$-algebra $A$ (it always is supposed to be unital in
the present paper, unless otherwise is explicitly stated) the
$A$-dual module $M'$ is the module of all bounded $A$-linear maps
from $M$ to $A$. $M$, equipped with an $A$-inner product  $\langle
\cdot,\cdot\rangle$, is called self-dual if the map $\wedge :
M\rightarrow M'$, $x^{\wedge}(\cdot)=\langle x,\cdot\rangle$ is an
isomorphism, and $M$ is called reflexive if the map $ \cdot:
M\rightarrow M''$, $ \dot{x}(f)=f(x)^*$ $(f\in M')$ is an
isomorphism. Unlike the Banach space situation the third dual
$M'''$ for $M$ is always isomorphic to $M'$, whereas the modules
$M$, $M'$ and $M''$ may be pairwise non-isomorphic in particular
situations (cf. \cite{Paschke2}).

$M$ is called \emph{finitely generated} Hilbert $C^*$-module if it
is an $A$-span of a finite system of its vectors, $M$ is called
\emph{finitely generated projective} if it is a direct summand of
$A^n$ for some $n$. It is easy to see that a finitely generated
projective module over a unital $C^*$-algebra is always self-dual.
$M$ is called \emph{countably generated} if it is a norm-closure
of an $A$-span of a countable system of its vectors. Kasparov's
stabilization theorem asserts that any countably generated Hilbert
$A$-module can be represented as a direct orthogonal summand of
the standard module $l_2(A)$  \cite{KaspJO}.

\begin{teo}\label{teo:f.g._proj}
Any finitely generated Hilbert module over a unital $C^*$-algebra
is a projective one.
\end{teo}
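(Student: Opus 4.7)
The plan is to realize $M$ as an orthogonal direct summand of a free module $A^n$ by constructing a self-adjoint projection on $A^n$ via the adjoint calculus available in Hilbert $C^*$-modules. I would choose generators $x_1,\dots,x_n\in M$ and form the canonical surjection $T\colon A^n\to M$, $(a_i)\mapsto\sum_i x_i a_i$, where $A^n$ carries the standard inner product $\<(a_i),(b_i)\>=\sum_i a_i^*b_i$. A direct computation shows $T$ is adjointable with $T^*y=(\<x_1,y\>,\dots,\<x_n,y\>)$, hence both $T$ and $T^*$ are bounded.

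The crucial step is to prove that $TT^*\in\End(M)$ is invertible. It is surjective because $T$ is; and it is injective, because adjointability of $T$ gives $\Ker T^*=(\Im T)^\perp=M^\perp=\{0\}$, and then $TT^*y=0$ implies $\<T^*y,T^*y\>=\<TT^*y,y\>=0$, whence $T^*y=0$, whence $y=0$. Since $TT^*$ is a continuous bijection of the Banach spaces underlying $M$, the Banach isomorphism theorem supplies a bounded inverse. I expect this to be the step that carries the real content of the argument: the rest is formal, but invertibility hinges on combining the Hilbert-module identity $\Ker T^*=(\Im T)^\perp$ with a Banach-space open mapping argument, and one must resist the temptation to mimic a purely algebraic splitting, since $T$ itself need not be injective.

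With $(TT^*)^{-1}$ in hand, set $S:=T^*(TT^*)^{-1}\colon M\to A^n$. Then $TS=\Id_M$, so $p:=ST$ is an idempotent in $\End(A^n)\cong M_n(A)$, since $p^2=S(TS)T=ST=p$. Moreover $(TT^*)^{-1}$ is self-adjoint as the inverse of a positive invertible element, so $p^*=T^*S^*=T^*(TT^*)^{-1}T=ST=p$. Thus $p$ is a self-adjoint projection, $T$ restricts to a Hilbert-module isomorphism $pA^n\xrightarrow{\sim}M$ with inverse $S$, and $A^n=pA^n\oplus(1-p)A^n$ exhibits $M$ as an orthogonal summand of $A^n$; in particular $M$ is finitely generated projective.
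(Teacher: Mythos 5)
Your overall strategy---realizing $M$ as the range of a self-adjoint idempotent on $A^n$ by inverting $TT^*$---is sound and, once completed, is more self-contained than the paper's proof (which simply quotes Kasparov's stabilization theorem together with Mishchenko--Fomenko). But there is a genuine gap exactly at the step you yourself flag as carrying the real content: the invertibility of $TT^*$. You assert that $TT^*$ ``is surjective because $T$ is.'' This does not follow. The range of $TT^*$ is $T(\Im T^*)$, so its surjectivity amounts to the identity $A^n=\Ker T+\Im T^*$, which is precisely the non-trivial orthogonal-splitting fact one is trying to establish; surjectivity of $T$ alone gives no control over $\Im T^*$. Nor can the gap be closed from the injectivity you do prove: an injective positive adjointable operator on a Hilbert module need not be surjective (multiplication by $t$ on the module $C[0,1]$ over itself is a counterexample), and a closed submodule with trivial orthogonal complement need not be dense, so no Hilbert-space-style ``dense range plus closed range'' shortcut is available. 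Consequently your appeal to the Banach isomorphism theorem, which presupposes bijectivity of $TT^*$, is made at the wrong place.

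The correct route (Lance, Theorem 3.2, essentially Mishchenko's lemma) is to apply the open mapping theorem to $T$ itself: since $T\colon A^n\to M$ is a bounded surjection of Banach spaces, there is $C>0$ such that every $y\in M$ has a preimage $x$ with $\|x\|\le C\|y\|$; then $\|y\|^2=\|\<Tx,y\>\|=\|\<x,T^*y\>\|\le C\|y\|\,\|T^*y\|$ shows that $T^*$ is bounded below, whence $\|TT^*y\|\,\|y\|\ge\|\<TT^*y,y\>\|=\|T^*y\|^2\ge C^{-2}\|y\|^2$, so $TT^*$ is bounded below; and a bounded-below self-adjoint element of the $C^*$-algebra $\End(M)$ of adjointable operators is invertible by the functional calculus (otherwise $0$ would lie in its spectrum, contradicting bounded-belowness). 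With that lemma in place, everything after ``With $(TT^*)^{-1}$ in hand'' is correct; note only that the resulting map $pA^n\to M$ is an adjointable invertible module map rather than a unitary, which is harmless for projectivity.
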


\begin{proof} By Kasparov's stabilization theorem
 a finitely generated module is an orthogonal
direct summand of the standard module $l_2(A)$. Therefore it is
projective by \cite[Theorem 1.3]{MF}.
\end{proof}

Now we will prove more statements
about (not) finitely generated and (not) finitely generated
projective modules over commutative $C^*$-algebras.
Some related examples will be used in the sequel.

The next statement is well known.
\begin{lem}\label{lem:cononfg}
Let $X$ be a compact Hausdorff space and $x_0$ be its non-isolated
point. Then the module
$
C(X)_0:=\{f\in C(X) : f(x_0)=0\}
$
is not finitely generated over $C(X)$.
\end{lem}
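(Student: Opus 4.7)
I would argue by contradiction. Assume $C(X)_0$ is generated as a $C(X)$-module by finitely many elements $f_1,\dots,f_n$, and introduce the continuous nonnegative function
$$
F := \sum_{i=1}^n |f_i|^2 \in C(X),
$$
which satisfies $F(x_0)=0$. The first step is a pointwise Cauchy--Schwarz estimate: for every $g\in C(X)_0$, writing $g=\sum_i h_i f_i$ with $h_i\in C(X)$ yields
$$
|g(x)|^2 \le \Bigl(\sum_i |h_i(x)|^2\Bigr)\Bigl(\sum_i |f_i(x)|^2\Bigr) \le C_g\, F(x),\qquad C_g := \sum_i \|h_i\|_\infty^2.
$$

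The second step is to apply this inequality to a carefully chosen test element of $C(X)_0$ that decays ``too slowly'' at $x_0$ compared to $F$. The natural choice is $g := F^{1/3}$, which is continuous (since $t\mapsto t^{1/3}$ is continuous on $[0,\infty)$) and vanishes at $x_0$, so $g\in C(X)_0$. The estimate from the previous step then reads $F^{2/3}(x)\le C\, F(x)$ on all of $X$, forcing $F(x)\ge C^{-3}$ at every point where $F(x)>0$. Thus $F$ takes values only in $\{0\}\cup[C^{-3},\infty)$.

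The third step is to contradict this dichotomy by exhibiting points arbitrarily close to $x_0$ at which $F$ is positive but smaller than $C^{-3}$. Continuity of $F$ at $x_0$ provides a neighborhood $V$ of $x_0$ on which $F<C^{-3}$, so it remains to find some $y\in V$ with $F(y)>0$. If $F$ vanished identically on $V$, then all $f_i$ would vanish on $V$; since $x_0$ is non-isolated, $V\setminus\{x_0\}$ is nonempty, and Urysohn's lemma (applicable because compact Hausdorff spaces are normal) produces an $h\in C(X)$ with $h(x_0)=0$ and $h(y)=1$ for some chosen $y\in V\setminus\{x_0\}$. This $h$ lies in $C(X)_0$ but cannot be expressed as $\sum_i h_i f_i$, because the right-hand side vanishes on $V$. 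The expected obstacle is precisely this Urysohn step (and the fact that ``non-isolated'' has to be used somewhere); once it excludes $F\equiv 0$ near $x_0$, the continuity of $F$ gives a point in $V$ with $0<F(y)<C^{-3}$, contradicting the conclusion of the second step and completing the proof.
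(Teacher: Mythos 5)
Your proof is correct and rests on the same idea as the paper's: test the finite-generation hypothesis against a fractional power of the generators' modulus, which lies in $C(X)_0$ but vanishes at $x_0$ too slowly to be a $C(X)$-linear combination of the $f_i$. The paper uses $\sum_i |f_i|^{1/2}$ with a direct sup-norm estimate where you use $\bigl(\sum_i |f_i|^2\bigr)^{1/3}$ with Cauchy--Schwarz, and it dismisses as ``obvious'' the Urysohn step you spell out; these are only cosmetic differences.
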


\begin{proof}
Assume there is a finite number of generators $f_1,\dots,f_s$ of
$C(X)_0$ over $C(X)$ and consider the function
$$
f=|f_1|^{1/2}+\dots +|f_s|^{1/2}.
$$
Obviously it can not vanish on an entire neighborhood of $x_0$.
Under our assumptions
$$
f=g_1\cdot f_1 + \dots +g_s\cdot f_s
$$
for some $g_i\in C(X)$. Suppose $I$ is the subset of the set
$\{1,\dots,s\}$ such that $g_i$ is not the zero function if and
only if $i\in I$. Let us put
\[
m_i=\|g_i\|=\max_{x\in X} |g_i(x)|, \quad i\in I.
\]
There is an open neighborhood $U$ of $x_0$ such that the following
inequalities
\[
|f_i(x)|^{1/2}\le  \frac 1{2\cdot m_i}
\]
hold for any $x\in U$, $i\in I$. Hence
$$
|f(x)|  \le \sum_{i\in I} |f_i(x)|^{1/2}\cdot |f_i(x)|^{1/2} m_i
\le \frac 12 \sum_{i\in I} |f_i(x)|^{1/2} \le \frac 12
\sum_{i=1}^s |f_i(x)|^{1/2}= \frac 12 |f(x)|.
$$
A contradiction.
\end{proof}

Let
$p : Y\rightarrow X
$
be a continuous map of Hausdorff topological
spaces. Then $C(Y)$ is a Banach $C(X)$-module with respect to the
action:
\begin{gather}\label{eq:action}
(f\xi)(y)=f(y)\xi(p(y)), \qquad f\in C(Y), \xi\in C(X).
\end{gather}

\begin{lem}\label{lem:directed_sets}
Let $X'\subset X$ be a directed set $\{x_\a\}$ together with a
unique limit point $x$. Let $Y'\subset Y$ be equal to the union of
directed sets $\{y^0_\a\}$ and $\{y^1_\a\}$ with a common limit
point $y$, and
$$
p(y^0_\a)=p(y^1_\a)=x_\a,\qquad p(y)=x.
$$
Then $C(Y')$ is not a finitely generated module over $C(X')$.
\end{lem}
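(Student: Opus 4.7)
I will argue by contradiction. Suppose $C(Y')$ were generated over $C(X')$ by finitely many $f_1,\dots,f_s\in C(Y')$. The decisive feature of the geometry is that $C(X')$-scalars see only the image under $p$, so $\xi(x_\a)$ is a single number over the two-point fibre $p^{-1}(x_\a)=\{y^0_\a,y^1_\a\}$. This gives the key identity
$$
h(y^1_\a)-h(y^0_\a)=\sum_{i=1}^s\xi_i(x_\a)\bigl(f_i(y^1_\a)-f_i(y^0_\a)\bigr)
$$
for any $h=\sum_i f_i\xi_i$, whence the quantitative bound $|h(y^1_\a)-h(y^0_\a)|\le sM_\a\max_i\|\xi_i\|$, where $M_\a:=\max_i|f_i(y^1_\a)-f_i(y^0_\a)|$. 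Continuity of the $f_i$ at $y$, together with $y^0_\a,y^1_\a\to y$, forces $M_\a\to 0$.

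The plan is then to construct an explicit witness $h\in C(Y')$ whose fibrewise jumps go to $0$ strictly slower than $M_\a$, contradicting the bound above. The natural choice is $h(y^0_\a)=0$, $h(y^1_\a)=M_\a^{1/2}$, $h(y)=0$: since the points $y^i_\a$ are isolated in $Y'$ (only $y$ is a limit point) and $M_\a^{1/2}\to 0$, this function is continuous on $Y'$. Were $h=\sum_i f_i\xi_i$, the bound above would read $M_\a^{1/2}\le sM_\a\max_i\|\xi_i\|$, so $M_\a\ge(s\max_i\|\xi_i\|)^{-2}$ whenever $M_\a>0$, which is incompatible with $M_\a\to 0$.

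I expect the main (and essentially only) obstacle to be the degenerate situation where $M_\a=0$ eventually, in which case the witness above becomes trivial past that index. Here the hypothesis itself furnishes a replacement: if $f_i(y^0_\a)=f_i(y^1_\a)$ eventually for every $i$, then every element of the putative span shares this property, yet any $h\in C(Y')$ with $h(y^0_\a)=0$ and $h(y^1_\a)=c_\a$ for a null sequence $c_\a$ that is nonzero cofinally violates it. Once this case is split off, the rest of the argument is a clean quantitative use of continuity at $y$ combined with the fact that $C(X')$ cannot separate the two preimages of $x_\a$.
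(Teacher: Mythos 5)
Your proof is correct, and it takes a genuinely different route from the paper's. The paper decomposes $C(Y')$ as a direct sum $C(Y')_1\oplus C(Y')_0$, where $C(Y')_1$ consists of the functions determined by one branch (a copy of $C(X')$) and $C(Y')_0$ of the functions supported on the other branch and vanishing at $y$ (a copy of $C(X')_0=\{\xi\in C(X'):\xi(x)=0\}$), and then quotes Lemma~\ref{lem:cononfg}, which says that $C(X')_0$ is not finitely generated. You instead apply the jump functional $h\mapsto h(y^1_\alpha)-h(y^0_\alpha)$, which annihilates the $C(X')$-coefficients, and run a square-root comparison directly on the jumps $M_\alpha$ of the generators; this is in effect an inlining of the $|f_i|^{1/2}$ trick that powers the proof of Lemma~\ref{lem:cononfg}, so the quantitative engine is the same, but you bypass both the direct-sum decomposition and the auxiliary lemma (what you lose is the structural fact $C(Y')\cong C(X')\oplus C(X')_0$, which the paper's route makes visible and reuses elsewhere). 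Your case split is sound: if $M_\alpha>0$ cofinally, the witness forces $M_\alpha\ge (s\max_i\|\xi_i\|)^{-2}$ cofinally, against $M_\alpha\to 0$; otherwise $M_\alpha=0$ eventually and the characteristic function of a single point $y^1_\beta$ beyond the threshold, which is continuous because $y^1_\beta$ is isolated in $Y'$, fails to lie in the span. Two points to make explicit when writing this up: first, the degenerate case in fact never occurs, since $\chi_{\{y^1_\alpha\}}\in C(Y')$ already forces $M_\alpha>0$ for every $\alpha$, so you may drop the case split entirely; second, continuity of the witness $h$ at $y$ requires that $x_\alpha\mapsto M_\alpha$ extend continuously by $0$ at $x$, i.e.\ that neighborhoods of $x$ pull back to a neighborhood base of $y$ in $Y'$ --- this holds in the setting where the lemma is applied ($p$ closed, both branches converging to $y$), and the paper's own identification of $C(Y')_1$ with $C(X')$ relies on exactly the same fact, so it is a shared implicit hypothesis rather than a defect of your approach.
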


\begin{proof}
 Consider two $C^*$-subalgebras $C(Y')_1$ and
$C(Y')_0$ of the $C^*$-algebra $C(Y')$, where $C(Y')_1$ consists
of those continuous functions, which are constant on
$\{y^0_\a\}\cup y$, and $C(Y')_0$ consists of those continuous
functions, which are zero on $\{y^1_\a\}\cup y$. Then any
continuous function $f$ on $Y'$ can be represented in a unique way
as the sum $f=f_1+f_0$ of the function $f_1\in C(Y')_1$, which is
equal to $f$ on $\{y^1_\a\}\cup y$, and the function $f_0=f-f_1\in
C(Y')_0$. Hence, $C(Y')=C(Y')_1 \oplus C(Y')_0$. Clearly,
$C(Y')_1$ is isomorphic to $C(X')$, and $C(Y')_0$ is isomorphic to
$C(X')_0$ as Hilbert $C(X')$-modules, where $C(X')_0$ consists of
continuous functions vanishing at $x$. Thus, if $C(Y')$ is
finitely generated, then $C(X')_0$ is finitely generated too. A
contradiction with Lemma \ref{lem:cononfg}.
\end{proof}

\begin{lem}\label{lem:subspaces}
Given the map {\rm (\ref{eq:p_Y_X})}, where $X, Y$ are normal
Hausdorff spaces. Suppose $X'\subset X$ is a closed subset,
$Y'=p^{-1}(X')$, and $C(Y)$ is a finitely generated $C(X)$-module.
Then
\begin{enumerate}[\rm (i)]
    \item $C(Y')$ is a finitely generated $C(X')$-module.
    \item $C(Y'')$ is a finitely generated $C(X')$-module for any closed subset
    $Y''\subset Y'$.
\end{enumerate}
 \end{lem}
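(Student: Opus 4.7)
The plan is to use Tietze's extension theorem together with the observation that the $C(X)$-action on $C(Y)$ behaves well under restriction. Concretely, if $f_1,\dots,f_n \in C(Y)$ generate $C(Y)$ as a $C(X)$-module, I would argue that their restrictions generate the relevant subspace as a $C(X')$-module.

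For part (i), let $g \in C(Y')$. Since $Y$ is normal and $Y' = p^{-1}(X')$ is closed in $Y$, Tietze's extension theorem produces a lift $\widetilde{g} \in C(Y)$ with $\widetilde{g}|_{Y'} = g$. Write $\widetilde{g} = \sum_{i=1}^n \xi_i \, f_i$ with $\xi_i \in C(X)$, where the action is as in (\ref{eq:action}). Using normality of $X$, restrict the scalars: set $\eta_i := \xi_i|_{X'} \in C(X')$. For $y \in Y'$ we have $p(y) \in X'$, so
\[
g(y) = \widetilde{g}(y) = \sum_{i=1}^n \xi_i(p(y))\, f_i(y) = \sum_{i=1}^n \eta_i(p(y))\, (f_i|_{Y'})(y),
\]
which is precisely $g = \sum_i \eta_i \cdot (f_i|_{Y'})$ with respect to the $C(X')$-action on $C(Y')$ coming from the restricted map $p|_{Y'} : Y' \to X'$. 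Hence $f_1|_{Y'},\dots,f_n|_{Y'}$ generate $C(Y')$ as a $C(X')$-module.

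For part (ii), the same trick works with an extra extension step. Given $h \in C(Y'')$ with $Y'' \subset Y'$ closed, apply Tietze in $Y$ (which is normal and in which $Y''$ is closed) to extend $h$ to $\widetilde{h} \in C(Y)$. Decompose $\widetilde{h} = \sum_i \xi_i\, f_i$ as above and restrict to $Y''$. Since $p(Y'') \subset p(Y') \subset X'$, the same calculation yields $h = \sum_i \eta_i \cdot (f_i|_{Y''})$ with $\eta_i = \xi_i|_{X'} \in C(X')$, so the restrictions $f_i|_{Y''}$ generate $C(Y'')$ as a $C(X')$-module.

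There is no genuine obstacle here: the only mild point is to be sure that the $C(X)$-module multiplication descends to a $C(X')$-module multiplication on the restricted function spaces, which is immediate from the pointwise formula (\ref{eq:action}) and from $p(Y') \subset X'$. Normality is used solely to guarantee the Tietze extensions exist on $Y$ (and indirectly on $X$ so that we may regard $\xi_i|_{X'}$ as a bona fide element of $C(X')$; since we only restrict rather than extend, compactness of $X$ already suffices, but the hypothesis is harmless).
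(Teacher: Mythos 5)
Your proof is correct and follows essentially the same route as the paper: Tietze extension to $Y$, decomposition over the generators, then restriction of the coefficient functions to $X'$. The only (immaterial) difference is in part (ii), where the paper extends from $Y''$ to $Y'$ and invokes the decomposition from part (i), while you extend directly to $Y$; both work since $Y''$ is closed in $Y$.
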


\begin{proof}
(i) Consider generators $\{f_1,\dots,f_n\}$ of $C(Y)$ over $C(X)$
and put $f'_i=f_i|_{Y'}$, $i=1,\dots,n$. Then by the Tietze
theorem for any $h'\in C(Y')$ there is $h\in C(Y)$ satisfying
$h|_{Y'}=h'$. Since $h=f_1g_1+\dots+f_ng_n$ for some $g_1,\dots,
g_n\in C(X)$, one has $h'=f_1'g_1'+\dots+f_n'g_n'$, where
$g_i'=g_i|_{X'}$.

(ii) Given a closed subset $Y''\subset Y'$. For any function
$f''\in C(Y'')$ it is possible to construct its extension  $f'\in
C(Y')$, which may be decomposed as
$f'=f_1'\alpha_1+\dots+f_n'\alpha_n$ with $\alpha_i\in C(X')$.
Then $f''=f_1''\alpha_1+\dots+f_n''\alpha_n$ with
$f_i''=f_i'|_{Y''}\in C(Y'')$ as required.
\end{proof}

\begin{lem}\label{lem:subspaces2}
Given the map {\rm (\ref{eq:p_Y_X})}, where $X$ and $Y$ are
compact Hausdorff spaces. Let $X'\subset X$ be a closed subset and
$Y'=p^{-1}(X')$. If $C(Y)$ is a finitely generated projective
$C(X)$-module, then $C(Y')$ is a finitely generated projective
$C(X')$-module.
\end{lem}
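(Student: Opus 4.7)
The plan is to use the projective structure of $C(Y)$ directly and then identify the base-change module with $C(Y')$. Using the hypothesis, choose a projection $P \in M_n(C(X))$ and an isomorphism $\Phi \colon C(Y) \to P \cdot C(X)^n$ of $C(X)$-modules, set $J := \{\xi \in C(X) : \xi|_{X'} = 0\}$ (so that entrywise restriction induces $C(X)/J \cong C(X')$), and let $P' \in M_n(C(X'))$ be the image of $P$ under entrywise restriction. Since the latter is a $*$-homomorphism, $P'$ is again a projection, and $P' \cdot C(X')^n$ is finitely generated projective over $C(X')$; the goal is to show this module is isomorphic to $C(Y')$.

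The crucial observation is that, under $\Phi$, the submodule $J \cdot C(Y)$ corresponds to $P \cdot J^n = J^n \cap (P \cdot C(X)^n)$: the inclusion $\Phi(J\cdot C(Y))\subseteq P\cdot J^n$ uses commutativity of $C(X)$ and the identity $v = Pv$ for $v \in P \cdot C(X)^n$, and the reverse inclusion is obtained by writing any $P(\xi_1,\dots,\xi_n)$ with $\xi_k \in J$ as $\sum_k \xi_k \cdot \Phi^{-1}(Pe_k)$. Two consequences follow. First, $J \cdot C(Y)$ is closed in $C(Y)$, being the image under an isomorphism of an intersection of two closed subsets of $C(X)^n$. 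Second, the entrywise restriction map $P \cdot C(X)^n \to P' \cdot C(X')^n$, $(f_1,\dots,f_n)\mapsto(f_1|_{X'},\dots,f_n|_{X'})$, has kernel exactly $P \cdot J^n$ and is surjective by applying Tietze extension coordinatewise and then $P$. Combining with $\Phi$ yields
$$
C(Y)/(J \cdot C(Y)) \ \cong\ P' \cdot C(X')^n.
$$

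It remains to identify $C(Y)/(J \cdot C(Y))$ with $C(Y')$. The restriction $\rho \colon C(Y) \to C(Y')$ is surjective by Tietze (as in the proof of Lemma \ref{lem:subspaces}), and $J \cdot C(Y) \subseteq \ker \rho$ is immediate. For the reverse inclusion, note that $J \cdot C(Y)$ is in fact a (closed) $*$-ideal of the $C^*$-algebra $C(Y)$, since $g \cdot (\xi f) = \xi (gf) \in J\cdot C(Y)$ for any $g \in C(Y)$, $\xi \in J$, $f \in C(Y)$. By the Gelfand correspondence it equals $\{f \in C(Y) : f|_Z = 0\}$ for the closed set $Z$ of common zeros; clearly $Y' \subseteq Z$, and if $y \notin Y'$ then $p(y) \notin X'$, so Urysohn on the disjoint closed sets $\{p(y)\}$ and $X'$ in the normal space $X$ produces $\xi \in J$ with $\xi(p(y)) \neq 0$, and $p^*\xi \in J \cdot C(Y)$ does not vanish at $y$. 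Thus $Z = Y'$ and $\ker \rho = J \cdot C(Y)$, giving
$$
C(Y')\ \cong\ C(Y)/(J \cdot C(Y))\ \cong\ P' \cdot C(X')^n,
$$
which is finitely generated projective over $C(X')$. The main obstacle is checking that the algebraic submodule $J \cdot C(Y)$ is already closed in $C(Y)$, and this is precisely where the finite-generation-projective hypothesis is used essentially, via the explicit identity $P \cdot J^n = J^n \cap (P \cdot C(X)^n)$.
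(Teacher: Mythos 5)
Your argument is correct, and it takes a genuinely different route from the paper's. The paper works entirely with the pair of maps exhibiting $C(Y)$ as a direct summand: it defines $i'\colon C(Y')\to C(X')^n$ and $s'\colon C(X')^n\to C(Y')$ by the ``extend by Tietze, apply $i$ (resp.\ $s$), restrict'' recipe, and the technical burden falls on showing $i'$ is well defined and injective (which the authors do via topological injectivity of $i$ and an $\varepsilon$--neighborhood argument). You instead perform a base change: writing $C(Y)\cong P\cdot C(X)^n$ and $C(X')\cong C(X)/J$, you get $C(Y)/(J\cdot C(Y))\cong P'\cdot C(X')^n$ essentially for free, and the only non-formal content is the identification $C(Y)/(J\cdot C(Y))\cong C(Y')$, which you reduce to the Gelfand correspondence for the closed $*$-ideal $J\cdot C(Y)$ plus a Urysohn argument showing its hull is exactly $p^{-1}(X')$. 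This cleanly isolates where the hypothesis $Y'=p^{-1}(X')$ enters, and it generalizes verbatim to the statement $M\otimes_A A/J$ is projective over $A/J$; the paper's proof is more elementary but more computational. Two small points you should make explicit: (a) the closedness of $J\cdot C(Y)$ requires $\Phi$ to be a homeomorphism, which holds because $\Phi^{-1}$ is automatically bounded (it is the restriction of the module map $v\mapsto\Phi^{-1}(Pv)$ on the free module $C(X)^n$, hence bounded) and a bounded module bijection of Banach modules is bicontinuous by the open mapping theorem; and (b) the step from ``closed $*$-ideal whose hull is $Y'$'' to ``equals $\ker\rho$'' is the standard bijection between closed ideals of $C(Y)$ and closed subsets of $Y$, which deserves a citation or a sentence. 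Neither is a gap, just a place to be explicit. (As an aside, closedness of $J\cdot C(Y)$ actually holds for any Banach $C(X)$-module by Cohen factorization, so projectivity is not ``essentially used'' there, contrary to your closing remark; it is used essentially only to produce $P$.)
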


\begin{proof} Let $M=C(Y)$, $M'=C(Y')$. Then one has two
$*$-epimorphisms
\[
\varphi : M\rightarrow M'\quad \mathrm{and}\quad \psi :
C(X)\rightarrow C(X')
\]
given by $\varphi(f)=f|_{Y'}$ and $\psi(\alpha)=\alpha|_{X'}$
respectively, and satisfying the conditions
\[
\varphi(f\alpha)=\varphi(f)\psi(\alpha), \quad \varphi(1)=1, \quad
\psi(1)=1,
\]
where $f\in C(Y)$, $\alpha\in C(X)$. There is an injection $i: M
\hookrightarrow C(X)^n$ and a surjection $s:C(X)^n\to M$, such
that $s\circ i=\Id_M$. In particular, $i\circ s\circ i\circ
s=i\circ s=\pi$ for some idempotent $\pi$ on $C(X)^n$. Obviously,
$i$ is also topologically injective, i.e. $\|i(f)\|\ge k\|f\|$ for
a certain $k>0$ and for any $f\in C(Y)$. Define $i': C(Y')
\hookrightarrow C(X')^n$ in the following way. Take $f'\in C(Y')$,
extend it by Tietze's lemma to a continuous function $f$ on $Y$,
apply $i$ and then $\psi^n$. Evidently, the result does not depend
on the choice of extensions, because of modularity and topological
injectivity of $i$. Moreover, $i'$ is a module map and $i'$ is an
injection. To verify the last statement let us take any function
$f'\in C(Y')$ for which $i'(f')=\psi^ni(f)=0$, where $f$ is a
certain continuous extension of $f'$ to $Y$. This implies
$i(f)|_{X'}=0$. Then for any $\varepsilon>0$ there exists an open
neighborhood $U$ of $X'$ such that $\|i(f)|_U\|<\varepsilon$.
Consider a function $0\le\gamma\le 1$ of $C(X)$, which is $1$ on
$X'$ and $0$ outside of $U$. Assume  that $f'\ne 0$, then
$|f'(y)|=C> 0$ for a certain point $y\in Y'$. Consequently,
$\|\gamma f\|\ge|(\gamma f)(y)|=C$. On the other hand, one has
$\|i(\gamma f)\|=\|\gamma i(f)\|<\varepsilon$ due to modularity of
$i$. But it contradicts to topological injectivity of $f$. Define
in a similar way $s':C(X')^n\to C(Y')$: take $(f'_1,\dots,f'_n)$,
extend them by Tietze's lemma to $(f_1,\dots,f_n)$ on $X$, apply
$s$ and then $\varphi$. It is well defined. Varying functions
$f'_i$ and their extensions we obtain all elements of $C(Y)$ as
$(f_1,\dots,f_n)$. This implies surjectivity of $s'$. Evidently,
$s'\circ i'=\Id_{M'}$.
\end{proof}

\begin{lem}\label{lem:sequence}
Let $X=\{x\}\cup\{x_\a\}$, where $x_\a$ is a net, which converges
to the point $x$, $Y=\{y\}$, $p(y)=x$. Then $C(Y)$ is finitely
generated but not projective Banach module over $C(X)$ with
respect to the action {\rm(\ref{eq:action})}.
\end{lem}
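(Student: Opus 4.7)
My plan is to reduce everything to the trivial observation that $C(Y) = \mathbb{C}$ and the $C(X)$-action is the character ``evaluation at $x$''. Since $Y = \{y\}$ is a single point, the action (\ref{eq:action}) simplifies to $c \cdot \xi = c\,\xi(x)$ for $c \in \mathbb{C} = C(Y)$ and $\xi \in C(X)$. Hence $C(Y)$ is generated as a $C(X)$-module by $1$, since the set $\{\xi(x) : \xi \in C(X)\}$ is all of $\mathbb{C}$ (even the constant functions $\xi$ suffice). So finite generation is immediate.

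For the non-projectivity I would argue by contradiction. First I would observe that $x$ is not isolated in $X$, because $x_\a \to x$ along a net with $x_\a \ne x$. Assume that $C(Y)$ is a direct summand of some $C(X)^n$ via $C(X)$-linear bounded maps $j : C(Y) \to C(X)^n$ and $\pi : C(X)^n \to C(Y)$ with $\pi \circ j = \Id$. Write $j(1) = (\eta_1, \dots, \eta_n)$. Then, comparing the two expressions for $j(\xi \cdot 1)$ obtained from $C(X)$-linearity of $j$ and from the identity $\xi \cdot 1 = \xi(x) \in C(Y)$, I would deduce the coordinate-wise functional equation
\[
\xi(z)\,\eta_i(z) = \xi(x)\,\eta_i(z), \qquad z \in X,\ \xi \in C(X),\ i = 1,\dots,n.
\]
Varying $\xi$ this forces each $\eta_i$ to vanish on $X \setminus \{x\}$. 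Continuity of $\eta_i$ together with $x_\a \to x$ then gives $\eta_i(x) = \lim_\a \eta_i(x_\a) = 0$, hence $j(1) = 0$, contradicting $\pi(j(1)) = 1$.

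I do not anticipate a real obstacle here; the mechanism is simply that $C(X)$-linearity of a hypothetical section traps its image in functions supported at the single non-isolated point $\{x\}$, and such functions vanish by continuity. The only point requiring a line of care is fixing the meaning of ``projective Banach $C(X)$-module'' as ``direct summand of a free module $C(X)^n$'', consistently with the usage elsewhere in the paper (in particular in the proof of Lemma \ref{lem:subspaces2}).
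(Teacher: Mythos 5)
Your proof is correct and is essentially the paper's argument in different clothing: the paper derives from projectivity a $C(X)$-valued inner product and shows $\langle f,f\rangle(x_\alpha)=0$ using $f=\varphi f$ for $\varphi$ equal to $1$ at $x$ and $0$ at $x_\alpha$, which is exactly your coordinate-wise identity $\xi(z)\eta_i(z)=\xi(x)\eta_i(z)$ applied to the components of the section $j$ into $C(X)^n$. Both arguments rest on the same two facts --- the module action factors through evaluation at $x$, and $x$ is non-isolated so continuity forces vanishing at $x$ as well --- so nothing needs to be added.
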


\begin{proof} Evidently, $C(Y)$ is finitely (namely, one)
generated over $C(X)$.
If it is finitely generated projective, then
there exists a $C(X)$-valued inner product $\<.,.\>$ on
$C(Y)$. For any $f\ne 0$ on $Y$ and any $x_\a$ consider
a continuous function $\f:X\to [0,1]$, $\f(x)=1$,
$\f(x_\a)=0$. Then $f=\1\cdot f= \f\cdot f$ and
$$
\<f,f\>(x_\a)=
\<\f f,\f f\>(x_\a)=\f(x_\a)\<f,f\>(x_\a)\f(x_\a)=0.
$$
Since $\a$ is an arbitrary index, $\<f,f\>\equiv 0$.
\end{proof}

\begin{ex}\label{ex:cover_of _circle}\rm
Let $X=S^1=[0,1]/\thicksim$ be a circle, which is thought as the
interval $[0,1]$ whose end points are identified, $Y=[0,1]$, and
$p: Y\rightarrow X$ is defined by the formula $p(t)=[t]$, where
$[t]\in [0,1]/\thicksim$ means the equivalence class of $t$. Then
$C(Y)$ is a Banach $C(X)$-module with respect to the
action~(\ref{eq:action}). We claim that this module is finitely
generated but not projective. Indeed, consider the sets
$Y_1=[0,1/3]$, $Y_2=(1/3,2/3)$, $Y_3=[2/3,1]$ and functions
$\psi_1, \psi_2\in C(Y)$ such that $\psi_1|Y_1=1, \psi_1|Y_3=0$,
$\psi_2|Y_1=0, \psi_2|Y_3=1$ and $\psi_1, \psi_2$ are linear on
$Y_2$. Then $\psi_1+\psi_2=1$ and for any $f\in C(Y)$ the equality
$f=f\psi_1+f\psi_2$ takes place. So $\psi_1, \psi_2$ are
generators of $C(Y)$ over $C(X)$ and the module is finitely
generated. By Lemmas \ref{lem:subspaces2} and \ref{lem:sequence}
it is not projective finitely generated.
\end{ex}

\begin{ex}\rm
Let $x_0$ be a point of the circle $S^1$ and $X=S^1\times
\{x_0\}\cup \{x_0\}\times S^1$ be a union of two circles (i.e.
``8''). Let $Y$ be a disjoint union $S^1\sqcup S^1$ and the
natural surjective map $p : Y\rightarrow X$ has one pre-image for
all points except of $x_0$. Then it immediately follows from
Lemma~\ref{lem:subspaces2} and Example~\ref{ex:cover_of _circle}
that $C(Y)$ is a finitely generated but not projective
$C(X)$-module.
\end{ex}

\section{Branched coverings and Hilbert C*-modules}\label{sec:branchmodu}
We start this section with a couple of observations.

\begin{lem}\label{lem:count_gen_mod}
Consider the map {\rm(\ref{eq:p_Y_X})}, where $X$, $Y$ are
compact and $Y$ has a countable base. Then $C(Y)$ is a countably
generated module over $C(X)$ with respect to the action {\rm
(\ref{eq:action})}.
\end{lem}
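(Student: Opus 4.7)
The plan is to reduce countable generation as a $C(X)$-module to separability of $C(Y)$ as a Banach space, using that the constants are in the image of $i:C(X)\to C(Y)$.

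First I would observe that a compact Hausdorff space with a countable base is metrizable (Urysohn's metrization theorem). Consequently $C(Y)$, as the space of continuous functions on a compact metric space, is separable in the sup-norm (one can, e.g., use Stone--Weierstrass applied to the countable $\mathbb{Q}+i\mathbb{Q}$-algebra generated by distance functions from a countable dense set of $Y$, or cite the standard fact directly).

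Next I would pick a countable dense subset $\{f_n\}_{n\in\mathbb{N}}\subset C(Y)$ and consider it as a set of elements of the $C(X)$-module $C(Y)$ with respect to the action (\ref{eq:action}). Since $i(1_{C(X)})=1_{C(Y)}$, the constant function $1$ on $Y$ lies in the image of $i$, so the scalar multiples $\lambda f_n$ with $\lambda\in\mathbb{C}$ all belong to the $C(X)$-span of $\{f_n\}$. Therefore the $C(X)$-linear span contains the $\mathbb{C}$-linear span of $\{f_n\}$, which is already dense in $C(Y)$, and its norm-closure is all of $C(Y)$. By definition this means $C(Y)$ is countably generated as a Hilbert (in fact, as a Banach) $C(X)$-module.

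The argument is essentially a one-liner once metrizability is invoked; there is no real obstacle, and no use of the map structure of $p$ beyond the fact that $i$ is unital. If one wanted to avoid Urysohn's theorem, one could instead build a countable generating set explicitly from a countable base $\{U_k\}$ of $Y$ by taking, for each pair $(U_k,U_l)$ with $\overline{U_k}\subset U_l$, a Urysohn function separating $\overline{U_k}$ from $Y\setminus U_l$; these functions again have dense $\mathbb{C}$-span by Stone--Weierstrass, and the same conclusion follows.
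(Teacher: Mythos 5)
Your argument is correct and is essentially the same as the paper's: the authors likewise deduce separability of $C(Y)$ from compactness plus second countability (citing standard references rather than spelling out Urysohn metrization and Stone--Weierstrass) and then note that a countable dense subset generates $C(Y)$ over $C(X)$. You merely make explicit the small step that the $C(X)$-span contains the $\mathbb{C}$-span because $i$ is unital, which the paper leaves implicit.
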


\begin{proof}
Under our assumptions the $C^*$-algebra $C(Y)$ is separable
\cite[1.6.9]{RokhlinFuksBook},~\cite[Prop. 1.11]{ElemNCG}, so it
is a countably generated module over $C(X)$.
\end{proof}

Now we would like to describe an example of a countably, but not
finitely generated Hilbert $C^*$-module arising from the simplest
branched covering. In addition, this example illustrates some
ideas of the proof of Theorem
\ref{teo:branched_coverings_inner_product}.

\begin{ex}\rm\label{ex:reflex_mod1}
Consider the map $p : Y\rightarrow X$ of
Figure~\ref{fig:reflex_mod1},
\begin{figure}[ht]
\begin{picture}(90,60)
\put(0,0){\line(1,0){70}} \put(80,0){$X$}
\put(0,40){\line(1,0){40}} \put(40,40){\line(3,2){30}}
 \put(40,40){\line(3,-2){30}}  \put(80,30){$Y$}
\put(30,30){\vector(0,-1){15}} \put(20,20){$p$}
\end{picture}
 \caption{Example~\ref{ex:reflex_mod1}}\label{fig:reflex_mod1}
\end{figure}
where X is an interval, say [0,1],
and Y is the topological union of one interval with two copies of
another half-interval with a branch point at $1/2$. Then $C(Y)$ is
a Banach $C(X)$-module for the action {\rm (\ref{eq:action})}.
Define a $C(X)$-valued inner product on $C(Y)$ by the formula
\begin{gather}\label{eq:inner_pr}
    \langle f,g\rangle (x)=\frac{1}{\# p^{-1}(x)} \sum_{y\in
    p^{-1}(x)} \overline{f(y)}g(y),
\end{gather}
where $\# p^{-1}(x)$ is the cardinality of the pre-image
$p^{-1}(x)$. The obvious inequality
\begin{gather*}
    \frac{\|f\|^2}{2}\le \|\langle f,f\rangle\|\le\|f\|^2,\quad
    f\in C(Y)
\end{gather*}
implies that the $C^*$-Hilbert norm $\|\langle f,f\rangle\|$ is
 equivalent to the $C^*$-norm on $C(Y)$. Therefore $C(Y)$ is
a Hilbert $C(X)$-module with respect to the inner product
(\ref{eq:inner_pr}) and this module is countably
generated by Lemma~\ref{lem:count_gen_mod}. Moreover, this module
is reflexive by \cite[Theorem 4.4.2]{MaTroBook}.
But this module is not self-dual. Indeed, by
Lemmas~\ref{lem:directed_sets},~\ref{lem:subspaces} it is not a
finitely generated projective one. Recall (cf.~\cite{TroPAMS})
that a unital $C^*$-algebra is said to be MI (module infinite) if
each countably generated Hilbert module over it is projective
finitely generated if and only if it is self-dual.
The $C^*$-algebra
$C(X)$ of this example
is MI by~\cite[Theorem 33]{TroPAMS}, therefore $C(Y)$ is
not a self-dual module over it.
\end{ex}

\begin{teo}\label{teo:branched_coverings_inner_product}
Let $p: Y\rightarrow X$ be a branched covering.
 Then $C(Y)$ may be equipped with a $C(X)$-valued inner product
 in such a way that it becomes a $C(X)$-Hilbert module, whose norm is
 equivalent to the $C^*$-norm of  $C(Y)$.
\end{teo}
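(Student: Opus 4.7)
My plan is to patch local sheet-adapted inner products through a partition of unity on $X$, generalizing the formula (\ref{eq:inner_pr}) of Example~\ref{ex:reflex_mod1}. By Lemma~\ref{lem:neiborhoods} and Proposition~\ref{prop:surj_reg_neighb}, each $x \in X$ has a regular neighborhood $U$ with $p^{-1}(U) = V_1 \sqcup \cdots \sqcup V_m$ and each $p|_{V_k}$ surjective; the compactness of $X$ then gives a finite subcover $\{U_\alpha\}_{\alpha=1}^{L}$, to which I associate a subordinate partition of unity $\{\chi_\alpha\}$ on $X$.

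On each $U_\alpha$, with sheet decomposition $p^{-1}(U_\alpha) = V_{\alpha,1}\sqcup\cdots\sqcup V_{\alpha,m_\alpha}$ and center $x_\alpha$, I define
\[
\langle f,g\rangle_\alpha(x) := \sum_{k=1}^{m_\alpha} \frac{1}{\#(V_{\alpha,k}\cap p^{-1}(x))} \sum_{y\in V_{\alpha,k}\cap p^{-1}(x)} \overline{f(y)}\,g(y),
\]
and set $\langle f,g\rangle(x) := \sum_\alpha \chi_\alpha(x)\,\langle f,g\rangle_\alpha(x)$, each summand extended by zero outside $U_\alpha$. Sesquilinearity, $C(X)$-linearity, and the positivity $\langle f,f\rangle\ge 0$ are inherited from the fiberwise convex-combination structure. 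Continuity of $\langle f,g\rangle_\alpha$ at $x_\alpha$ is the exact computation of Example~\ref{ex:reflex_mod1}: as $z \to x_\alpha$, every preimage in $V_{\alpha,k}$ converges to the single preimage $y_{\alpha,k}$, and the normalization $1/\#(V_{\alpha,k}\cap p^{-1}(z))$ precisely cancels the growing multiplicity by continuity of $f$ and $g$.

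For the norm equivalence, set $N := \sup_x \#p^{-1}(x)$. The upper bound $\langle f,f\rangle(x) \le N\|f\|_{C(Y)}^{2}$ is a routine convex-combination estimate. For the lower bound, pick $y^*\in Y$ with $|f(y^*)| = \|f\|_{C(Y)}$, set $x^* := p(y^*)$, and choose $\alpha$ with $\chi_\alpha(x^*)\ge 1/L$ and the (unique) $k$ with $y^* \in V_{\alpha,k}$; then
\[
\langle f,f\rangle(x^*) \ge \frac{\chi_\alpha(x^*)\,|f(y^*)|^{2}}{\#(V_{\alpha,k}\cap p^{-1}(x^*))} \ge \frac{\|f\|_{C(Y)}^{2}}{LN}.
\]
Thus the Hilbert norm is equivalent to the complete $C^*$-norm on $C(Y)$, and completeness of the resulting Hilbert $C(X)$-module follows at once.

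The hard part will be verifying continuity of $\langle f,g\rangle_\alpha$ at points of $U_\alpha$ other than the center $x_\alpha$: whenever the per-sheet multiplicity $\#(V_{\alpha,k}\cap p^{-1}(\cdot))$ jumps at some $x' \neq x_\alpha$ inside $U_\alpha$, the center-based cancellation of Example~\ref{ex:reflex_mod1} fails there. I would address this by iteratively refining the cover, adding a regular neighborhood centered at each such $x'$ and arranging the partition of unity so that $\chi_\alpha$ vanishes in a neighborhood of $x'$ while the new piece takes over; the uniform bound $N$ on the number of preimages together with the closed-stratum structure of Proposition~\ref{prop:strata} ensures that this refinement terminates after finitely many rounds, keeping the cover finite. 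An alternative, if direct continuity proves intractable, is to construct first a positive conditional expectation $E\colon C(Y)\to C(X)$ of topologically index-finite type (as developed in Section~\ref{sec:branchcovexpect}) and then set $\langle f,g\rangle := E(\overline{f}\,g)$, invoking \cite{FrankKirJOP} for the equivalence with the Hilbert-module formulation.
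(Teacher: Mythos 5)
Your construction is at its core the same as the paper's: sheet-wise averages over regular neighborhoods with surjective sheets (Proposition \ref{prop:surj_reg_neighb}), glued by a partition of unity, with norm equivalence coming from the uniform bound $N$ on fibre cardinalities exactly as you state. You have also correctly isolated the only real difficulty --- the local formula is discontinuous at points $x'\neq x_\alpha$ of $U_\alpha$ where the per-sheet multiplicity $\#\bigl(V_{\alpha,k}\cap p^{-1}(\cdot)\bigr)$ jumps and redistributes non-uniformly among the preimages of $x'$. Where you differ is in how this is repaired. The paper runs an induction over the strata $\widehat X_j$: the product is defined first on the lowest non-empty stratum by the plain average (\ref{eq:base_induct}), and at each inductive step the regular neighborhoods are centered only at points of the already-treated stratum $\widehat X_j$ (which is closed by Proposition \ref{prop:strata}), the remaining sets $O_i$ of the cover being chosen disjoint from $\widehat X_j$; the sheet formula is then evaluated only on the single new stratum $X_{j+k}$. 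Your iterative refinement is the same induction run in the opposite direction, and it does terminate, but for a reason you should make explicit rather than assert: since every sheet of a neighborhood as in Proposition \ref{prop:surj_reg_neighb} maps onto $U_\alpha$, a bad point $x'$ has at least one preimage in each sheet and at least two in the offending one, so $\#p^{-1}(x')>\#p^{-1}(x_\alpha)$; hence each round of refinement centres its new neighborhoods on strictly higher strata and the process stops after at most $N$ rounds, while closedness of the strata keeps the set of bad points met by $\supp\chi_\alpha$ compact, so each round stays finite. Two smaller remarks: your $\langle\cdot,\cdot\rangle_\alpha$ omits the normalisation $1/m_\alpha$ of (\ref{eq:def_inn_prod2}), which is harmless for this theorem but is what makes the associated conditional expectation unital; and the proposed fallback through a conditional expectation of topologically finite index buys nothing, since constructing such an $E$ amounts to building the same weight function $\mu$ and requires exactly the same gluing and continuity argument.
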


\begin{proof}
Given any functions $f$, $g$ of $C(Y)$. We will construct their
$C(X)$-valued inner product by induction over the sets
$\widehat{X}_j$, $j=0,1,\dots, N$. Suppose ${X}_{j_1}$ is
the first non-empty stratum. Then the formula
\begin{gather}\label{eq:base_induct}
\langle f,g\rangle(x)=\frac{1}{\#p^{-1}(x)}
    \sum_{y\in p^{-1}(x)} \overline{f(y)} g(y)
\end{gather}
provides the base of induction. Now suppose the inner product is
defined on the strata $X_1,\dots,{X_j}$ and the next non-empty set
is $X_{j+k}$, $k>0$.

 By Proposition
\ref{prop:surj_reg_neighb} for any point
$x\in \widehat{X}_j$ there exists its regular neighborhood $U$
satisfying (\ref{eq:neiborhoods}) such that the restriction of $p$
on $V_k$ is surjective for any $k=1,\dots,m$. We will define the
inner product $\langle f,g\rangle$ at any point $z$ of $U\cap
{X}_{j+k}$ as follows. Let
$$
p^{-1}(z)\cap V_k=\{u_1^{(k)},\dots,u_{i_k}^{(k)}\},
$$
where $i_1+\dots+i_m=j+k$ and  $i_k\neq 0$ for any $0$. Denote
$$
    f_{k}:=f|_{V_{k}},\quad g_{k}:=g|_{V_{k}}
$$
and define  a function $\langle f_{k},g_{k}\rangle : U\cap
{X}_{j+k} \rightarrow \mathbb{C}$ by the formula:
\begin{equation}\label{eq:def_inn_prod}
\langle f_{k},g_{k}\rangle (z)= \frac{1}{i_k}\sum_{t=1}^{i_k}
\overline{f_{k}(u_{t}^{(k)})}g_{k}(u_{t}^{(k)}).
\end{equation}
Then
\begin{equation}\label{eq:def_inn_prod2}
\langle f,g\rangle_{U\cap {X}_{j+k}} (z)= \frac{1}{m}
    \sum_{k=1}^{m}\langle f_{k},g_{k}\rangle (z).
\end{equation}

Consider such a regular
neighborhood $U=U(x)$ for each point $x\in \widehat{X}_j$.
Extend the system $\{U(x) : x\in
\widehat{X}_j\}$ up to a cover of $\widehat{X}_{j+k}$ by open
sets $O_i$ satisfying $O_i\cap \widehat{X}_j= \varnothing$. Let
$\{U_1,\dots,U_K, O_1,\dots, O_M\}=\{W_1,\dots,W_{K+M}\}$ be a
finite subcovering of the compact space $\widehat{X}_{j+k}$ and
$\{\varphi_i(x)\}_{i=1}^{K+M}$ be a partition of unity
subordinated to this
subcovering. Define $\langle f,g\rangle_{W_i}$ over $W_i$ by the
formulas (\ref{eq:def_inn_prod}), (\ref{eq:def_inn_prod2}) if
$i\le K$ and by the formula (\ref{eq:base_induct}) otherwise.
Define an inner product on
$C(p^{-1}(\widehat{X}_{j+k}))$ in the following way:
\begin{equation}\label{eq:inner_prod_reflth}
    \langle f,g\rangle(x)=\sum_{i=1}^{K+M}\langle f,g\rangle_{W_i} (x)
    \varphi_i(x),
\end{equation}
where $f,g\in C(Y)$, $x\in \widehat{X}_{j+k}$. The inductive step
is complete.

 We claim that $\langle f,g\rangle$ is continuous on $X$.
 Indeed,  consider  any point $x\in X$
 and any net $\{x_\alpha\}$ converging to $x$.
Then $x\in X_j$ for some $j$. Denote $\{x_\alpha^{(i)}\}=
\{x_\alpha\}\cap X_i$. By Proposition \ref{prop:strata} we can
assume that $i\ge j$. It remains to verify that for any $i$
the difference $|\langle f,g\rangle (x)-\langle f,g\rangle
(x_\alpha^{(i)})|$ goes to zero when $x_\alpha^{(i)}$ goes to $x$.
But it directly follows from the definition of the inner product,
namely from the continuity of (\ref{eq:def_inn_prod}).

Thus $\<f,f\>(x)$ is a convex combination of not more than
$N=\max\limits_{x\in X} \#p^{-1}(x)<\infty$ numbers $|f(y_i)|^2$,
where $p(y_i)=x$. Hence we obtain the following inequality
\begin{gather*}
    \frac{\|f\|^2}{N}\le \|\langle f,f\rangle\|\le\|f\|^2,\quad
    f\in C(Y).
\end{gather*}
Thus the Hilbert norm $\|\langle f,f\rangle\|$ is equivalent to
the $C^*$-norm of $C(Y)$.
\end{proof}

\begin{teo}\label{teo:criterium_fin_gen_mod2}
Suppose $X$ and $Y$ are  compact Hausdorff connected spaces and
$p:Y\rightarrow X$ is a continuous surjection. If $C(Y)$ is a
projective finitely generated Hilbert module over $C(X)$ with
respect to the action~{\rm(\ref{eq:action})}, then $p$ is a
finite-fold covering.
\end{teo}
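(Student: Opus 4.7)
The plan is to prove three properties of $p$: (a) $\#p^{-1}(x)$ is finite and uniformly bounded in $x\in X$; (b) $p$ is locally injective at each $y\in Y$; (c) $\#p^{-1}(x)$ is constant on $X$, equal to some integer $k$. Combined with a standard disjoint-neighborhood construction, these yield local trivializations of $p$ realizing it as a $k$-fold covering.

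\textbf{Proofs of (a) and (b).} If $f_1,\ldots,f_n$ generate $C(Y)$ over $C(X)$, then for each $x\in X$ the Tietze-induced surjection $C(Y)\to C(p^{-1}(x))$ sends the $f_i$ to $\mathbb{C}$-linear spanners of $C(p^{-1}(x))$, whence $\#p^{-1}(x)=\dim_\mathbb{C} C(p^{-1}(x))\le n$; this gives (a). For (b), suppose $p$ is not locally injective at some $y\in Y$ with $x=p(y)$; then for each neighborhood $V$ of $y$ we can find distinct $u_V,v_V\in V$ with $p(u_V)=p(v_V)=:x_V$. The nets $u_V,v_V\to y$ and $x_V\to x$, and after passing to a suitable subnet (possible thanks to the finite fibers from (a)) the closed subsets $X':=\{x\}\cup\{x_V\}\subset X$ and $Y'':=\{y\}\cup\{u_V\}\cup\{v_V\}\subset Y$ satisfy the hypotheses of Lemma~\ref{lem:directed_sets}. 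Then Lemma~\ref{lem:subspaces}(ii) forces $C(Y'')$ to be a finitely generated $C(X')$-module, contradicting Lemma~\ref{lem:directed_sets}.

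\textbf{Proof of (c) and conclusion.} Fix $x_0\in X$ with $p^{-1}(x_0)=\{y_1,\ldots,y_m\}$; by (b) pick pairwise disjoint open $V_j\ni y_j$ with $p|_{V_j}$ injective. Since $p$ is a closed map ($Y$ compact, $X$ Hausdorff), the set $U:=X\setminus p(Y\setminus\bigsqcup_j V_j)$ is open, contains $x_0$, and satisfies $p^{-1}(U)\subset\bigsqcup_j V_j$. For a closed neighborhood $\overline{U'}\subset U$, Lemma~\ref{lem:subspaces2} gives that $C(p^{-1}(\overline{U'}))$ is finitely generated projective over $C(\overline{U'})$. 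Each $Y'_j:=V_j\cap p^{-1}(\overline{U'})$ is clopen in the compact space $p^{-1}(\overline{U'})$, hence compact, and $p|_{Y'_j}$ is a continuous injection from a compact into a Hausdorff space, thus a homeomorphism onto its closed image $p(Y'_j)\subset\overline{U'}$. Consequently $C(p^{-1}(\overline{U'}))=\bigoplus_{j=1}^m C(p(Y'_j))$ as $C(\overline{U'})$-modules, and a direct fiber computation shows that its rank at $x'\in\overline{U'}$ equals $\#p^{-1}(x')$. Since the rank of a finitely generated projective module is locally constant and $X$ is connected, $\#p^{-1}\equiv k$ on $X$, proving (c). In particular $m=k$ at $x_0$, each $p|_{Y'_j}\colon Y'_j\to\overline{U'}$ is a continuous bijection between compact Hausdorff spaces (hence a homeomorphism), and restriction to an open $U''\subset\overline{U'}$ delivers a local trivialization of $p$ near $x_0$; thus $p$ is a $k$-fold covering.

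I expect the main technical obstacle to be (c): identifying the combinatorial datum $\#p^{-1}(x)$ with the rank of $C(p^{-1}(\overline{U'}))$ as a $C(\overline{U'})$-module requires combining the local decomposition afforded by (b) with the stability of projectivity under restriction to closed subsets (Lemma~\ref{lem:subspaces2}), and without either ingredient neither the matching upper nor lower bound on $\#p^{-1}(x)$ is automatic.
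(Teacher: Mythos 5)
Your proof is correct and its skeleton matches the paper's up to the point where constancy of the fibre cardinality must be established, but there it takes a genuinely different route. Steps (a) and (b) are essentially the paper's: the bound $\#p^{-1}(x)\le n$ via restriction of generators to a finite fibre is identical, and your local-injectivity argument via Lemma~\ref{lem:directed_sets} and Lemma~\ref{lem:subspaces} is the same mechanism the paper uses to show that the minimal stratum $X_k$ is \emph{open} (you prove the slightly stronger, pointwise statement, with the same shared informality about the sets $X'$, $Y''$ being closed with a unique limit point). The divergence is in (c): the paper shows $X_k$ is also \emph{closed} by exhibiting, at a hypothetical limit point where the cardinality jumps, a point-module $C(\{y_i\})$ over the algebra of a convergent net, which by Lemma~\ref{lem:sequence} is finitely generated but not projective --- this is where projectivity enters for the paper. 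You instead use local injectivity to split $C(p^{-1}(\overline{U'}))$ as $\bigoplus_j C(p(Y'_j))$ with each summand of fibre dimension $0$ or $1$, and invoke local constancy of the rank of a finitely generated projective module over $C(\overline{U'})$ (Serre--Swan, or equivalently: each complemented ideal of $C(\overline{U'})$ is generated by an idempotent, so each $p(Y'_j)$ is clopen). This buys a one-shot proof of local constancy in place of the paper's separate open/closed arguments, and it makes the final trivialization cleaner, since $p(Y'_j)=\overline{U'}$ follows immediately; the cost is reliance on the (standard, but unproved here) rank fact, where the paper gets by with the elementary Lemma~\ref{lem:sequence}. Both arguments use Lemma~\ref{lem:subspaces2} to restrict projectivity to closed subsets.
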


\begin{proof}
Let functions $g_1,\dots, g_n$ generate the projective module
$C(Y)$ over $C(X)$. Then we claim that the cardinality of the
pre-image of any point $x\in X$ does not exceed $n$. Indeed,
assume there is a point $x\in X$, whose pre-image is $\{y_1,
\ldots ,y_m\}$ and $m>n$. By the Urysohn's lemma there are
continuous functions $f_1,\ldots ,f_m\in C(Y)$ such that
$f_i(y_i)=1$ and $f_i(y_j)=0$ whenever $i\neq j$. The functions
$f_1,\ldots ,f_m$ can be expressed as linear combinations of the
generators $g_1,\dots, g_n$ with coefficients from $C(X)$. Let us
denote by $\widehat{f}_i$ and $\widehat{g}_j$ the restrictions of
$f_i$ and $g_j$ onto $\{y_1, \ldots ,y_m\}$. Then both
$\widehat{f}_i$ and $\widehat{g}_j$ belong to the vector space
\begin{gather*}
C(\{y_1, \ldots ,y_m\})\cong \mathbb{C}^m.
\end{gather*}
The vectors $\widehat{f}_1,\dots,\widehat{f}_m$
form a base of this vector space and, consequently, they can not
be represented as linear combinations of the vectors
$\widehat{g}_1,\dots,\widehat{g}_n$, when $m>n$. Thus,
$m$ does not exceed $n$.

Assume $k_x$ denotes the cardinality of the pre-image of a point
$x\in X$ and $k$ is a minimal value of $k_x$'s over $x\in X$.
Firstly, we claim that the set $X_k=\{x\in X : k_x=k\}$ is open.
Indeed, in the opposite case
 there is a net $\{x_\alpha\}$ in $X\setminus X_k$
 converging to a certain point $x$ of $X_k$. By Lemma
 \ref{lem:neiborhoods} one can  found a regular neighborhood $U$ of $x$
 satisfying the condition (\ref{eq:neiborhoods}) with $m=k$.
 Moreover, one can assume
 (passing to a sub-net of $\{x_\alpha\}$ if it is necessary)
that the net $\{x_\alpha\}$ belongs to $U$ and there is a number
$i$ such that the neighborhood $V_i$ has at least two points
$y'_\alpha$ and $y''_\alpha$ from the pre-image of $x_\alpha$ for
any $\alpha$. Put $X'=\{x\}\cup\{x_\alpha\}$ and $Y'=\{y\}\cup\{
y'_\alpha\}\cup\{y''_\alpha\}$, where $y=p^{-1}(x)\cap V_i$. Then
$C(Y')$ is a finitely generated module over $C(X')$ by
Lemma~\ref{lem:subspaces}. But this contradicts to
Lemma~\ref{lem:directed_sets}.

Secondly, let us show that $X_k$ is closed. In the opposite case
there is a net $\{x_\alpha\}$ of $X_k$ converging to some point
$x$ of $X_j$ with $j>k$. Denote $X'=\{x\}\cup\{x_\alpha\}$,
$Y'=p^{-1}(X')$ and choose neighborhoods $U$, $U'$ of the point
$x$ and $V_i$, $V_i'$ $(i=1,\dots,j)$ as in
Lemmas~\ref{lem:neiborhoods},~\ref{lem:neiborhoods2}. Then
$C(\sqcup \overline{V_i'})=\oplus C(\overline{V_i'})$ is a
finitely generated projective $C(\overline{U'})$-module by Lemma
\ref{lem:subspaces2}. Therefore, obviously, each
$C(\overline{U'})$-module $C(\overline{V_i'})$ is finitely
generated  too. We can assume (passing to a sub-net of
$\{x_\alpha\}$ if it is necessarily) that the intersection of the
set $p^{-1}(\{x_\alpha\})$ with a neighborhood $V_i$ is empty for
some number $i$.  Now consider the submodule
$C(p^{-1}|_{\overline{V_i'}}(X'))=C(\{y_i\})$ of the module
$C(\overline{V_i'})$, where $y_i=p^{-1}(x)\cap V_i$.
It has to be
finitely generated projective by
Lemmas~\ref{lem:subspaces} and \ref{lem:subspaces2}, but it is
impossible by Lemma~\ref{lem:sequence}.

So we have proved that the set $X\setminus X_k$ is both open and
closed and, consequently, it has to be empty, because $X$ is
supposed to be connected. Thus, all points of $X$ have the same
number of pre-images.

Now for an arbitrary point $x\in X$ let us choose its regular
neighborhood $U$ satisfying the condition~(\ref{eq:neiborhoods})
with $m=k$. Then $p$ is a (local) bijection, which is closed and
open (by our argument for branched coverings). Thus it is
a local homeomorphism.
\end{proof}

We complete this section with a couple of statements
relating coverings to some other classes of Hilbert
$C^*$-modules.

\begin{teo}\label{teo:self-dual_criterium}
Consider the map {\rm (\ref{eq:p_Y_X})}, where
 $X$ and $Y$ are compact spaces, $X$ is connected and
 $Y$ has a countable
base. Then the following conditions are equivalent:
\begin{enumerate}
    \item $C(Y)$ is a self-dual module with respect to the
    action {\rm(\ref{eq:action})};
    \item the map {\rm (\ref{eq:p_Y_X})} is a finite-fold
    covering.
\end{enumerate}
\end{teo}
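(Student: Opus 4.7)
The plan is to prove the two implications separately, with the easy direction (2)$\Rightarrow$(1) following directly from previously established theorems, while (1)$\Rightarrow$(2) requires combining the MI-property of $C(X)$ with Theorem \ref{teo:criterium_fin_gen_mod2}.

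For (2)$\Rightarrow$(1), I would just apply Theorem \ref{teo:mainhcover}: if $p$ is a finite-fold covering then $C(Y)$ is a finitely generated projective Hilbert $C(X)$-module. Since $C(X)$ is unital, any such module is self-dual (this is the fact noted in the preliminaries on Hilbert $C^*$-modules in Section \ref{sec:hilbmodcov}). So (1) holds.

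For the converse (1)$\Rightarrow$(2), I would first reduce self-duality to finite-generation-plus-projectivity and then invoke Theorem \ref{teo:criterium_fin_gen_mod2}. By Lemma \ref{lem:count_gen_mod}, the assumption that $Y$ has a countable base gives that $C(Y)$ is countably generated over $C(X)$. Note that since $X=p(Y)$ is a continuous Hausdorff image of the compact metrizable space $Y$, $X$ is itself compact metrizable, so in particular has a countable base and is a compact connected Hausdorff space. By \cite[Theorem 33]{TroPAMS} (the same result cited in Example \ref{ex:reflex_mod1}), $C(X)$ is MI, i.e. a countably generated Hilbert $C(X)$-module is self-dual if and only if it is finitely generated projective. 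Hence the self-duality hypothesis upgrades to finite-generation and projectivity of $C(Y)$ as a $C(X)$-module.

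It remains to apply Theorem \ref{teo:criterium_fin_gen_mod2} to conclude that $p$ is a finite-fold covering. Strictly speaking, that theorem is stated for $Y$ connected, but an inspection of its proof shows that connectedness of $Y$ is never used: the bound on $\#p^{-1}(x)$, the openness of the stratum $X_k$, and its closedness each go through for arbitrary compact Hausdorff $Y$, and the final step applies connectedness of $X$ alone to force $X_k = X$. Thus the conclusion follows. The main obstacle to watch out for is precisely this hypothesis alignment between Theorem \ref{teo:criterium_fin_gen_mod2} and the present statement, together with verifying that the MI result from \cite{TroPAMS} is applicable to our $C(X)$; once these are in hand the rest is essentially bookkeeping.
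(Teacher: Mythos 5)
Your proof is correct and follows essentially the same route as the paper: the easy direction via the finitely-generated-projective (hence self-dual) structure coming from a finite covering, and the converse via Lemma \ref{lem:count_gen_mod}, the MI property of $C(X)$ from \cite[Theorem 33]{TroPAMS}, and Theorem \ref{teo:criterium_fin_gen_mod2}. You are in fact slightly more careful than the paper, which silently applies Theorem \ref{teo:criterium_fin_gen_mod2} even though that theorem assumes $Y$ connected; your observation that its proof never uses connectedness of $Y$ (only of $X$, in the final clopen argument) is accurate and closes that small gap.
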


\begin{proof}
The implication ${\rm (ii)}\Rightarrow {\rm (i)}$
follows from \cite[Proposition 2.8.9]{Wata}
and  \cite[pp. 92--93]{Wata}
(see also Theorem \ref{teo:cond_expect} below).
To prove the
inverse implication let us remark that $X$ does not have
isolated points because it is connected, so the $C^*$-algebra
$C(X)$ is MI by~\cite[Theorem 33]{TroPAMS}. According to our
assumptions and Lemma \ref{lem:count_gen_mod} the $C(X)$-module
$C(Y)$ has to be both countably generated and self-dual. Therefore
it is a finitely generated projective module. Then by
Theorem \ref{teo:criterium_fin_gen_mod2} the map
(\ref{eq:p_Y_X}) defines a finite-fold covering.
\end{proof}

\begin{teo}\label{teo:refl_branched_cov}
Let $p: Y\rightarrow X$ be a branched covering over a compact
metric space $X$.  Then the $C(X)$-Hilbert module $C(Y)$ is
$C(X)$-reflexive.
\end{teo}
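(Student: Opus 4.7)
The plan is to reduce Theorem~\ref{teo:refl_branched_cov} to the reflexivity criterion \cite[Theorem 4.4.2]{MaTroBook}---the same theorem already applied in Example~\ref{ex:reflex_mod1}---which asserts that every countably generated Hilbert module over a commutative unital $C^*$-algebra with metrizable spectrum is reflexive. Applying it requires two ingredients: the Hilbert module structure on $C(Y)$, and countable generation of $C(Y)$ as a $C(X)$-module.

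The Hilbert module structure is provided directly by Theorem~\ref{teo:branched_coverings_inner_product}, which equips $C(Y)$ with a $C(X)$-valued inner product whose associated norm is equivalent to the sup-norm. Countable generation, by Lemma~\ref{lem:count_gen_mod}, will follow once $Y$ is shown to have a countable base.

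To establish the countable base of $Y$, I would combine three facts: $X$ is second countable (being compact metric); $p$ is open by Theorem~\ref{teo:openbranched}; and the fibers of $p$ are finite with uniformly bounded cardinality. Given $y\in Y$ with fiber $p^{-1}(p(y))=\{y=y_1,\dots,y_m\}$, I would choose a regular neighborhood $U$ of $p(y)$ with $p^{-1}(U)=V_1\sqcup\cdots\sqcup V_m$ and $y_i\in V_i$. The pairwise disjointness of the open $V_i$ forces $\overline{V_1}\cap p^{-1}(p(y))=\{y\}$: for $j\ne 1$ the open neighborhood $V_j$ of $y_j$ is disjoint from $V_1$, so $y_j\notin\overline{V_1}$. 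A compactness-plus-openness argument then shows that $\{V_1\cap p^{-1}(U_n):U_n\subset U\}$ is a countable neighborhood basis at $y$, where $\{U_n\}$ is a countable base of $X$: if some open $W\ni y$ contained no such set, one could extract a net in $\overline{V_1}\setminus W$ whose $p$-images tend to $p(y)$, and by compactness of $\overline{V_1}$ a subnet converging to a limit in $\overline{V_1}\cap p^{-1}(p(y))=\{y\}$, contradicting that $W$ is open at $y$. This gives first countability; the full second countability then follows from the same structural setup, since $p$ is a perfect open surjection onto a second countable space with uniformly bounded finite fibers, so standard results on the weight of perfect preimages give $w(Y)\le w(X)\cdot N=\aleph_0$.

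With $Y$ second countable, Lemma~\ref{lem:count_gen_mod} makes $C(Y)$ countably generated over $C(X)$, and \cite[Theorem 4.4.2]{MaTroBook} concludes reflexivity. The main technical obstacle is the verification that $Y$ is second countable; the rest of the proof is a direct assembly of results already cited in the paper.
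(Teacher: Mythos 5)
Your route is correct in outline but genuinely different from the paper's. The paper disposes of Theorem~\ref{teo:refl_branched_cov} in one line, by combining Theorem~\ref{teo:branched_coverings_inner_product} with the external reflexivity criterion \cite[Theorem 4.1]{FMT09}, which is tailored to exactly this situation (a Hilbert $C(X)$-module norm equivalent to the $C^*$-norm, $X$ compact metric). You instead go through countable generation and \cite[Theorem 4.4.2]{MaTroBook} --- precisely the route the paper itself takes in Example~\ref{ex:reflex_mod1} for the special case of Figure~\ref{fig:reflex_mod1}. What your route buys is that it stays inside the toolkit already visible in the paper (Lemma~\ref{lem:count_gen_mod} plus the reflexivity theorem for countably generated modules); what it costs is a new topological fact nowhere stated in the paper, namely that the total space $Y$ of a branched covering of a metrizable compactum is itself second countable. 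That fact is true, but it is also the one place where your argument is not yet a proof.

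Concretely: what you actually verify is first countability of $Y$ (the argument with $\overline{V_1}\cap p^{-1}(p(y))=\{y\}$ and compactness of $\overline{V_1}\setminus W$ is fine). But first countable plus compact Hausdorff does not imply second countable (double-arrow space, Alexandroff double circle), and the appeal to ``standard results on the weight of perfect preimages'' is too loose: for perfect maps alone the weight of the domain is \emph{not} controlled by the weight of the image (the one-point compactification of an uncountable discrete space maps perfectly onto a point), so openness and the uniform finite bound on the fibers must enter the argument essentially, not just be listed as hypotheses. The gap is fillable with the paper's own machinery. By Proposition~\ref{prop:strata} and Theorem~\ref{teo:openbranched}, each stratum $X_j$ is locally closed in $X$, hence second countable and Lindel\"of; by Proposition~\ref{prop:surj_reg_neighb} together with openness of $p$, over a regular neighborhood $U(x)$ of $x\in X_j$ each sheet $V_i(x)$ meets every fiber over $U(x)\cap X_j$ in exactly one point, so $p^{-1}(X_j)\to X_j$ is a $j$-fold covering and countably many points $x_k\in X_j$ suffice for the $U(x_k)\cap X_j$ to cover $X_j$. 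Your compactness argument, applied verbatim to these countably many sheets $V_i(x_k)$, then shows that $\{V_i(x_k)\cap p^{-1}(U_n)\}$, with $\{U_n\}$ a countable base of $X$, is a countable base of $Y$. Either insert this argument or replace the vague citation by a precise reference for open, closed, uniformly finite-to-one preimages of metrizable compacta; as written, the second-countability step is asserted rather than proved.
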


\begin{proof} It follows from Theorem
\ref{teo:branched_coverings_inner_product}
and \cite[Theorem 4.1]{FMT09}.
\end{proof}

\section{Branched coverings and conditional expectations}
\label{sec:branchcovexpect}

In this section we complete proofs of Theorems \ref{teo:mainbranchcov}
and \ref{teo:mainhcover} working with conditional
 expectations. Recall briefly some necessary facts from
 \cite{Wata}
 (see also \cite{Ped, Takesaki1}).

\begin{dfn}\label{dfn:condexp}\rm
 Suppose, $B$ is
 a $C^*$-algebra  and $i:A\hookrightarrow B$ is its $C^*$-subalgebra.
 A \emph{conditional expectation}
 $E: B\rightarrow A$ is a surjective projection of norm one satisfying
 the following conditions:
 \[
E(i(a)\cdot b)=aE(b),\quad E(b\cdot i(a))=E(b)a,\quad E(i(a))=a,
 \]
 for $a\in A$, $b\in B$. We deal with unital $C^*$-algebras
 and we will always assume, that
 \begin{enumerate}
    \item $E$ is \emph{positive}: $E(b^*b)\ge 0$ for any $b\in B$;
    \item $E$ is \emph{unital}, i.e. $i$ is unital, or $A$ and $B$
    have a common unity.
 \end{enumerate}
\end{dfn}

\begin{dfn}\label{dfn:condexpalgfi}\rm
 A family $\{u_1,\dots,u_n\}\subset B$
 is called a \emph{quasi-basis} for $E$ if
 \[
b=\sum_j u_j E(u_j^*b)\qquad \text{for } b\in B.
 \]
 A conditional expectation $E: B\rightarrow A$
 is \emph{algebraically of index-finite
 type} if there exists a finite quasi-basis for $E$. In this case the
 index of $E$ is defined by: $\mathrm{Index} (E)=\sum_j u_j u_j^*$,
 which is a positive invertible element in the center of $B$ and
 it does not depend on the choice of the quasi-basis
 $\{u_1,\dots,u_n\}$.
\end{dfn}

\begin{dfn}\label{dfn:exptopolfiniteindex}\rm
Given a $C^*$-algebra $B$ and its $C^*$-subalgebra $A$. A
conditional expectation $E: B\rightarrow A$ is \emph{topologically
of index-finite type} \cite{BDH} (see also \cite{FrankKirJOP}) if
the mapping $(K\cdot E-{\rm id}_B)$ is positive for some real
number $K\ge 1$.
\end{dfn}

We need the following result \cite[Theorem 1]{FrankKirJOP} (see
also \cite[Proposition 3.3]{BDH},
\cite[Theorems 3.4, 3.5]{Jolis91},
\cite[Proposition 2.1, Corollary 2.4]{AnStoj94},
\cite[Theorem 1.1.6, Remark 1.1.7]{Popa1995},
\cite[Proposition 1.1]{FMTZAA}):

\begin{prop}\label{prop:FMTZAA}
 Let $E: B\rightarrow A$ be a conditional expectation.
Then the following conditions are equivalent:
\begin{enumerate}
    \item $E$ is topologically of index-finite type;
    \item $E$ is faithful and the  pre-Hilbert $A$-module
    $\{B,E(\langle \cdot,\cdot\rangle_B)\}$ is complete with respect
    to the norm $\|E(\langle \cdot,\cdot\rangle_B)\|_A^{1/2}$.
\end{enumerate}
\end{prop}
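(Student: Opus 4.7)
The plan is to prove the two implications separately: the forward direction is a quick norm-equivalence argument, while the reverse splits into a soft Banach-space step and a more delicate $C^*$-order step. For (i)$\Rightarrow$(ii), I would start from the definition: $KE-\mathrm{id}_B\ge 0$ means $KE(b^*b)\ge b^*b$ in $B$ for every $b$. Taking norms gives $\|b\|^2=\|b^*b\|\le K\|E(b^*b)\|$. Combined with the contractivity $\|E(b^*b)\|\le\|b\|^2$ (since $E$ is unital and positive, hence of norm one), one obtains
\[
\frac{1}{K}\|b\|^2\le \|E(b^*b)\|\le\|b\|^2,
\]
so the Hilbert-module norm and the $C^*$-norm on $B$ are equivalent, and completeness in one is completeness in the other. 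Faithfulness of $E$ is immediate: $E(b^*b)=0$ forces $\|b\|^2\le 0$, hence $b=0$; this is exactly what is required for $E(\langle\cdot,\cdot\rangle_B)$ to be a non-degenerate $A$-valued inner product to begin with.

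For (ii)$\Rightarrow$(i), faithfulness of $E$ guarantees that $\langle b_1,b_2\rangle_E:=E(b_1^*b_2)$ is a definite $A$-inner product on $B$. The identity map from $B$ (with its $C^*$-norm) to $M=(B,\langle\cdot,\cdot\rangle_E)$ is continuous in one direction — namely $\|E(b^*b)\|^{1/2}\le\|b\|$ — and is a linear bijection; since both are complete Banach spaces by hypothesis, the bounded inverse theorem produces a constant $K\ge 1$ with $\|b\|^2\le K\|E(b^*b)\|$ for all $b\in B$.

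The main obstacle is to upgrade this scalar norm estimate to the element-wise inequality $KE(b^*b)\ge b^*b$ in $B$ (allowing a larger constant if needed). My approach exploits the uniformity of the norm estimate in $b$ by substituting $bc$ for $b$: $\|bc\|^2\le K\|E(c^*b^*bc)\|$ for all $b,c\in B$. Passing to the left regular representation $\lambda:B\hookrightarrow\mathcal{L}_A(M)$ (an isometric $*$-homomorphism, injective because $1\in M$ is cyclic and separating), and introducing the Jones-type projection $e_A\in\mathcal{L}_A(M)$ defined by $e_Ax=E(x)$, which satisfies the key identity $e_A\lambda(c)e_A=\lambda(E(c))e_A$, one can reformulate the desired inequality as an operator relation on $M$ and then evaluate at the separating vector $1$ to descend back to $B$. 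The truly delicate step — promoting a norm estimate to a $C^*$-order inequality — is the content of Frank and Kirchberg's \cite[Theorem~1]{FrankKirJOP}, and closely related arguments appear in \cite{BDH,Jolis91,AnStoj94,Popa1995,FMTZAA}; I expect this to be the hardest part of the proof.
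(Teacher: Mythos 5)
Your implication (i)$\Rightarrow$(ii) is fine and matches the paper, which simply calls it ``immediate.'' The problem is in (ii)$\Rightarrow$(i). You correctly obtain the scalar estimate $\|b\|^2\le K\|E(b^*b)\|$ from the bounded inverse theorem, and you correctly identify that the whole content of the proposition is the upgrade from this norm inequality to the order inequality $b^*b\le K\,E(b^*b)$ in $B$. But you do not actually perform that upgrade: the sketch via the left regular representation and the Jones projection $e_A$ is never brought to a conclusion, and you end by saying the delicate step ``is the content of Frank and Kirchberg's Theorem~1'' --- which is precisely the statement being proved (the paper attributes Proposition~\ref{prop:FMTZAA} to that very theorem). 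As written, the argument is circular at its only nontrivial point.

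The missing idea is an elementary regularization trick, and it is what the paper's proof supplies. Fix $b\in B$, let $\e>0$, and set $x=b\,(\e+E(b^*b))^{-1/2}$, where the inverse square root is taken in $A$ and viewed inside $B$. The $A$-bimodule property of $E$ gives
$$
E(x^*x)=(\e+E(b^*b))^{-1/2}\,E(b^*b)\,(\e+E(b^*b))^{-1/2}\le 1_B ,
$$
so $\|E(x^*x)\|\le 1$ and hence, by your scalar estimate, $\|x^*x\|\le K$, i.e.\ $x^*x\le K\cdot 1_B$ because $x^*x\ge 0$. Conjugating back by $(\e+E(b^*b))^{1/2}$ yields $b^*b\le K(\e+E(b^*b))$, and letting $\e\to 0$ gives $b^*b\le K\,E(b^*b)$, i.e.\ $K\cdot E-\mathrm{id}_B\ge 0$. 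Note how the substitution converts the uniformity of the norm bound over all of $B$ into an order statement at the single element $b$; this is the step your proposal gestures at with the substitution $b\mapsto bc$ but never closes. With this paragraph inserted in place of the appeal to \cite{FrankKirJOP}, your proof would be complete and would coincide with the paper's.
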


\begin{proof}
The second condition means that the original norm and the Hilbert
module norm on $B$ are equivalent, in particular,
$$
K\|E(x^*x)\|\ge \|x^*x\|
$$
for some constant $K>0$ and for any $x\in B$. Consider an element
$x=b (\e+ E(b^*b))^{-\frac 12}$ for $\e>0$. Then, obviously,
\[
(\e+ E(b^*b))^{-\frac 12} E(b^*b) (\e+ E(b^*b))^{-\frac 12}\le
1_B,
\]
what exactly means that $E(x^*x)\le 1_B$. Hence,  $\|x^*x\|\le K$,
or, equivalently, $x^*x\le K\cdot 1_B$.  In other words, one has
$$
(\e+ E(b^*b))^{-\frac 12} b^*b (\e+ E(b^*b))^{-\frac 12}\le K\cdot
1_B,
$$
which may be rewritten as $K(\e+ E(b^*b))\ge b^*b$. Since $\e>0$
is arbitrary, we are done. The converse is immediate.
\end{proof}

Let $i:A\to B$ be a unital inclusion of commutative
C*-algebras $A=C(X)$, $B=C(Y)$. Then its Gelfand dual
$p=i^*:Y\to X$ is an epimorphism.

\begin{dfn}\label{dfn:c_e_point-wise_comm_case}{\rm
A conditional expectation $E: C(Y)\rightarrow C(X)$ is said to be
\emph{fiber-wise} if for any $x\in X$ and $f\in C(Y)$ such that
$f|_{p^{-1}(x)}=0$ one has $E(\overline{f}f)(x)=0$, $p=i^*$,
$i:C(X)\ss C(Y)$.
 }
 \end{dfn}

Any unital $E$ is fiber-wise. Indeed, up to re-denoting
it is sufficient to prove that $E(f)(x)=0$. For any $\e>0$ there
is a neighborhood $U_\e$ of $p^{-1}(x)$ such that $|f(y)|<\e$
for any $y\in U_\e$. Choose a neighborhood $V_\e$ of $x$
such that $p^{-1}(V_\e)\ss U_\e$ and $a_\e\in C(X)=A$ such that
$\|a_\e\|=1$, $a_\e(x)=0$, $a_\e(x')=1$ for any $x'\not\in V_\e$.
Then $\|E(f-a_\e f)\|\le \|f-a_\e f\|<\e$ and
$E(a_\e f)(x)=a_\e(x) E(f)(x)=0$. Since $\e$ is arbitrary, we
are done.

Note, that the conditional expectation related to the inner
 product constructed in Theorem
 \ref{teo:branched_coverings_inner_product}
 is, obviously, fiber-wise.

\begin{teo}\label{teo:branched_cov_cond_expectations}
Let $X$, $Y$ be compact spaces, $i: C(X)\rightarrow C(Y)$ be a
unital
$*$-inclusion of $C^*$-algebras and $E: C(Y)\rightarrow C(X)$ be a
$($unital positive$)$ conditional expectation
topologically of index-finite
type. Then the map $p=i^*: Y\rightarrow X$ is a branched covering.
\end{teo}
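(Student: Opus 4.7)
By Theorem~\ref{teo:openbranched} it suffices to establish three properties of $p$: surjectivity, a uniform bound on the cardinalities of the fibers, and openness. Surjectivity is immediate from Gelfand duality, since $i$ is a unital monomorphism. Both remaining points rest on the defining inequality $K\cdot E(b^*b)\ge b^*b$ in $C(Y)$ supplied by Definition~\ref{dfn:exptopolfiniteindex}, which, evaluated at any $y\in p^{-1}(x)$, reads
$$K\cdot E(b^*b)(x)\ge |b(y)|^2,\qquad b\in C(Y),$$
together with the observation preceding the theorem that any unital conditional expectation is fiber-wise, so that $E(\overline{g}g)(x)=0$ whenever $g|_{p^{-1}(x)}=0$.

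First I would bound each fiber by $K$ points. Given $n$ distinct points $y_1,\dots,y_n\in p^{-1}(x)$, the normality of the compact Hausdorff space $Y$ together with Urysohn's lemma yields functions $\chi_i\in C(Y)$ with $\chi_i(y_i)=1$, $0\le\chi_i\le 1$, and pairwise disjoint supports. The displayed inequality applied to $b=\chi_i$ gives $E(\chi_i^2)(x)\ge 1/K$, and the estimate $\chi_i^2\le\chi_i$ combined with positivity of $E$ strengthens this to $E(\chi_i)(x)\ge 1/K$. Disjointness of supports gives $\sum_i\chi_i\le 1$ on all of $Y$, hence
$$1=E(1)(x)\ge\sum_{i=1}^n E(\chi_i)(x)\ge n/K,$$
whence $n\le K$. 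Thus every fiber is finite of cardinality at most $K$.

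For openness, by Lemma~\ref{lem:openlocalepim} it is enough to show $p$ is a local epimorphism. Suppose not: there exist $y\in Y$ and a neighborhood $U\ni y$ such that $p(U)$ is not a neighborhood of $x:=p(y)$; since then every neighborhood of $x$ meets $X\setminus p(U)$, one obtains a net $x_\alpha\to x$ with $p^{-1}(x_\alpha)\cap U=\varnothing$. Choose $b\in C(Y)$ with $\supp b\subset U$, $b(y)=1$, $0\le b\le 1$. Since $b|_{p^{-1}(x_\alpha)}\equiv 0$, the fiber-wise property forces $E(\overline{b}b)(x_\alpha)=0$ for every $\alpha$. But the basic inequality gives $E(\overline{b}b)(x)\ge |b(y)|^2/K=1/K>0$, contradicting the continuity of $E(\overline{b}b)\in C(X)$ at $x$.

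Putting the three facts together, Theorem~\ref{teo:openbranched} concludes that $p$ is a branched covering. The main obstacle is the first step: one must arrange the $\chi_i$'s to have pairwise disjoint supports so that $\sum_i\chi_i\le 1$ is available; the remaining ingredients are then straightforward consequences of the topological index-finite type inequality and continuity on $X$.
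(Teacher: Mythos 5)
Your proposal is correct and follows essentially the same route as the paper: disjointly supported Urysohn bump functions together with the inequality $K\cdot E\ge \mathrm{id}$ bound each fiber by $K$, and the fiber-wise property of a unital conditional expectation combined with continuity of $E(\overline{b}b)$ rules out any failure of openness, after which Theorem~\ref{teo:openbranched} finishes. The only cosmetic difference is that you phrase openness via Lemma~\ref{lem:openlocalepim} (local epimorphism), whereas the paper argues directly with a limit point of $X\setminus p(V)$.
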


\begin{proof} The map $p$ is surjective and continuous.
The number of pre-images of $p$ is uniformly bounded over
$X$. Indeed, suppose a point $x\in X$ has $n$ pre-images
$\{y_1,\dots,y_n\}$.
Consider non-negative
functions $f_k\in C(Y)$, $k=1,\dots,n$
such that $f_k(y_j)=\delta_{kj}$, $f_k:Y\to [0,1]$
and $f_k f_j$=0 if $k\neq j$.
By Definition \ref{dfn:exptopolfiniteindex}
$(i E)(f_k)(y_k)>\frac 1K f_k(y_k)$, i.e. $E(f_k)(x)>\frac 1K$.
Let $\1\in C(Y)$ be the unity element.
Then by positivity of $E$ we have
$$
E(\1)(x)\ge E\left(\sum_{k=1}^n f_k\right)(x)=
\sum_{k=1}^n E\left(f_k\right)(x)>\frac nK.
$$
Thus, if $n$ is not bounded, then $E(\1)=\1$ is not bounded.
A contradiction.

Now let us verify the item (i) of Definition
\ref{dfn:branched_coverings}. By Theorem \ref{teo:openbranched}
it is sufficient to verify that $p$ is an open map.
Suppose that $p$ is not open, i.e. there is an open set $V\ss Y$
such that $p(V)\ss X$ is not open. Let $x\in p(V)$ be a limit
point of $X\setminus p(V)$ and $y \in V$ its pre-image.
Consider a positive function $f:Y\to [0,1]$ such that $f(y)=1$
and $f$ vanishes outside $V$. Then $E(f)(x)>1/K$ while
$E(f)$ vanishes on $X\setminus p(V)$. Thus $E(f)$ is not
continuous. A contradiction.
\end{proof}

This completes the proof of Theorem \ref{teo:mainbranchcov}.
The next statement completes the proof of Theorem
\ref{teo:mainhcover}.

\begin{teo} \label{teo:cond_expect}
Let $E: B\rightarrow A$ be a conditional expectation,
where $C^*$-algebras $A$ and $B$ have a common unity.
Then the following conditions are equivalent:
\begin{enumerate}
    \item $E$ is algebraically of index-finite type;
    \item $B$ is a finitely generated projective $A$-module.
\end{enumerate}
\end{teo}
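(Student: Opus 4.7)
The plan is to move between quasi-bases for $E$ and finite generating systems for $B$ viewed as a Hilbert $A$-module with the pairing $E(b^*c)$.

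The implication $(\mathrm{i})\Rightarrow(\mathrm{ii})$ is a direct splitting argument. Given a quasi-basis $\{u_1,\dots,u_n\}$, I would introduce the $A$-linear maps $\psi\colon A^n\to B$, sending $(a_1,\dots,a_n)\mapsto \sum_j u_j a_j$, and $\phi\colon B\to A^n$, sending $b\mapsto (E(u_1^*b),\dots,E(u_n^*b))$. The quasi-basis identity $b=\sum_j u_j E(u_j^*b)$ translates exactly into $\psi\circ\phi=\Id_B$; hence $\phi$ embeds $B$ as an $A$-module direct summand of the free module $A^n$, so $B$ is finitely generated projective.

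For the converse $(\mathrm{ii})\Rightarrow(\mathrm{i})$, I would endow $B$ with the $A$-valued sesquilinear pairing $\<b,c\>_E:=E(b^*c)$, which is positive and $A$-linear in the right variable because $E$ is. A projective splitting $B\oplus C\cong A^n$ with retraction $\pi\colon A^n\to B$ furnishes a generating system $u_j:=\pi(e_j)$ together with $A$-linear functionals $\phi_j\colon B\to A$ (the composition of the embedding $B\hookrightarrow A^n$ with the $j$-th coordinate) satisfying $b=\sum_j u_j\phi_j(b)$. Since a finitely generated projective module over a unital C*-algebra is self-dual as a Hilbert $A$-module, a Riesz-type representation produces elements $w_j\in B$ with $\phi_j(b)=E(w_j^*b)$, which yields the intermediate identity $b=\sum_j u_j E(w_j^*b)$.

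The principal obstacle is upgrading this expansion into a genuine quasi-basis with matching vectors on both sides of $E$. My strategy is to encode the pair $(u_j,w_j)$ as a single idempotent $p\in M_n(A)$ (rewriting the reconstruction identity as $p^2=p$ under the matrix identification) and then to symmetrize it by passing to the unique positive square root of the positive element $p^*p$, or equivalently to polarize the pairing $E(\cdot^*\cdot)$; this produces a symmetric system $\{v_k\}\subset B$ with $b=\sum_k v_k E(v_k^*b)$, which is a quasi-basis by definition. The element $\mathrm{Index}(E):=\sum_k v_k v_k^*$ is then automatically positive and central, and is invertible because the reconstruction map is the identity, confirming that $E$ is algebraically of index-finite type.
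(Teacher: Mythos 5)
Your argument is a genuinely different route from the paper's, which simply cites Watatani's Corollary 3.1.4 and argues that its extra self-duality hypothesis is automatic for finitely generated projective modules over unital $C^*$-algebras. Your direction (i)$\Rightarrow$(ii) is correct and complete: $\phi$ and $\psi$ are right $A$-module maps because $E$ is an $A$-bimodule map, and $\psi\circ\phi=\Id_B$ exhibits $B$ as a retract of $A^n$. The converse, however, has a genuine gap at the Riesz representation step. Self-duality of a finitely generated projective module is self-duality with respect to an honest inner product, and the pairing $\langle b,c\rangle_E=E(b^*c)$ is positive definite only when $E$ is faithful --- which is not among the hypotheses. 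Without faithfulness the step fails, and in fact so does the implication: take $B=A\oplus A$ with $A$ embedded diagonally and $E(b_1,b_2)=b_1$. Then $B\cong A^2$ is free, hence finitely generated projective, yet $E$ has no quasi-basis, because the existence of a quasi-basis forces faithfulness (if $E(b^*b)=0$ then $E(u_j^*b)=0$ by the Cauchy--Schwarz inequality for the positive form $E(\cdot^*\cdot)$, whence $b=\sum_j u_jE(u_j^*b)=0$). Concretely, in this example the $A$-linear functional $(b_1,b_2)\mapsto b_2$ admits no representation $E(w^*\cdot)$. This degeneracy is exactly what the ``one additional condition'' in Watatani's Corollary 3.1.4 excludes, so your proof (like the statement itself) needs faithfulness of $E$ added to the hypotheses or to condition (ii).

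The second weak point is the symmetrization, which is the technical heart of (ii)$\Rightarrow$(i) and is only sketched. Passing to the positive square root of $p^*p$ for the idempotent $p\in M_n(A)$ does not by itself produce a quasi-basis: $\sqrt{p^*p}$ is not an idempotent, and the adjoint in $M_n(A)$ is taken with respect to the standard inner product of $A^n$, which your embedding $b\mapsto(\phi_j(b))_j$ does not intertwine with $E(\cdot^*\cdot)$. Assuming $E$ faithful, the standard repairs are: (a) the frame-operator argument --- from $b=\sum_j u_jE(w_j^*b)$ and Cauchy--Schwarz one obtains a lower frame bound for $\{u_j\}$, so $S(b):=\sum_j u_jE(u_j^*b)$ is a positive invertible adjointable operator on the Hilbert module $(B,E(\cdot^*\cdot))$ (complete because algebraically finitely generated pre-Hilbert modules over unital $C^*$-algebras are automatically complete), and $v_j:=S^{-1/2}u_j$ is a quasi-basis; or (b) realize $B$ as an \emph{orthogonal} direct summand of $A^n$ with its standard inner product and set $v_k:=P(e_k)$ for the orthogonal projection $P$ and the standard basis $e_k$, so that $\sum_k v_k\langle v_k,b\rangle=P\bigl(\sum_k e_k\langle e_k,b\rangle\bigr)=b$. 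Either of these, spelled out, closes the argument; as written, the step is an idea rather than a proof.
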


\begin{proof} The statement \cite[Corollary 3.1.4]{Wata}
 differs from our theorem
only by one additional condition, which may be omitted in the
unital $C^*$-case, because in this situation
all finitely generated projective modules are self-dual.
\end{proof}

The remaining part of the section is devoted to a clarifying of
the role of the index of $E$
in our theory.  The main idea of \cite{BDH}
concerning the definition of the index element is the following.
Given a $W^*$-algebra $B$ and its $W^*$-subalgebra $A$. Consider a
conditional expectation $E: B\rightarrow A$ topologically of
index-finite type, then it defines an $A$-Hilbert module structure
on $B$. Choose any quasi-orthonormal basis $\{x_i\}$ (relating to
this inner structure) in $B$ and define the index of $E$ as the
sum $\sum x_i^*x_i$ with respect to the ultra-weak topology.
Actually, this definition is very close to the frame approach
elaborated in \cite{FrankLar, FrankLarJOP}. The index of $E$
provided both $B$ and $A$ are $C^*$-algebras was defined in
\cite{FrankKirJOP}, but in this situation it is an element of the
enveloping von Neumann algebra $B^{**}$ of $B$.

We have constructed in the proof of Theorem
\ref{teo:branched_coverings_inner_product}
a function $\mu:Y\to [0,1]$, such that $\sum_{p(y)=x} \mu(y)=1$
for any $x\in X$. This function (not uniquely determined !)
was used to define a $C(X)$-valued inner product $\<.,.\>=\<.,.\>_\mu$
in such a way that
$$
\<f,f\>_\mu(x)=\sum_{p(y)=x} f^*(y)f(y)\mu(y).
$$
Similarly for the induced conditional expectation $E=E_\mu$:
$$
E_\mu (f) (x) =\sum_{p(y)=x} f(y)\mu(y).
$$
As we have explained, this expectation $E_\mu$ is topologically of
index-finite type. Thus, by \cite{FrankKirJOP}, its index element
$\Index(E_\mu)\in B^{**}$ is defined, valued in the enveloping von
Neumann algebra of $B=C(Y)$.

In the remaining part of the section all spaces
are supposed to be \emph{second countable}.

Choose a countable partition of $X_j$,
$$
X_j=X_j^1 \sqcup \dots \sqcup X_j^{r} \sqcup \dots
$$
 in such a way that $X_j^s$ is open in
 $X_j^{s} \sqcup X_j^{s+1}\sqcup \dots $
 and $X_j^s$ is inside of some regular neighborhood
 of a point of $X_j$.
 For this purpose we take a countable covering $U_1,U_2,\dots$
 of $X_j$ with regular neighborhoods centered
 in points of $X_j$ and take
 $$
X_j^1:=U_1\cap X_j,\quad X_j^2:=(U_2\setminus U_1)\cap X_j,
\quad \dots \quad
X_j^{r}:=(U_r\setminus (U_1\cup\dots\cup U_{r-1}))\cap X_j,
\quad \dots
 $$
Let $p^{-1}(X_j^s)=Y_j^{s,1}\sqcup \dots \sqcup Y_j^{s,j}$ be its
``regular'' decomposition. Thus, $Y$ is a disjoint union of
countably many Borel sets $Y_j^{s,t}$. Define
$$
m_{jkt} (y)=\left\{%
\begin{array}{ll}
    \frac 1{\sqrt {\mu(y)}}, & \hbox{$y\in Y_j^{k,t}$ ;} \\
    0, & \hbox{otherwise.} \\
\end{array}%
\right.
$$
Then $m_{jkt}$ are pairwise orthogonal, $\<m_{jkt},m_{jkt}\>_\mu
(x)=0$, if $x\not\in X_j^k$, and
$$
\<m_{jkt},m_{jkt}\>_\mu (x)=\sum_{p(y)=x} m_{jkt}^*(y) m_{jkt}(y)\mu (y)
  =1
$$
if $x\in X_j^k$. Let $M:=\sum_{j,k,t} m_{jkt}^* m_{jkt}$. It is a
bounded function, which is continuous (being $\frac 1\mu$) on each
component of the disjoint union
$$
Y=\bigsqcup Y_j^{k,t}
$$
of Borel sets. Also, it is bounded (by the maximal number
of pre-images under $p$).

\begin{teo}\label{teo:vesiindex}
In this situation
$$
M=\Index(E_\mu).
$$
\end{teo}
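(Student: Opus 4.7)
The plan is to exhibit $\{m_{jkt}\}$ as a quasi-orthonormal basis for $E_\mu$ inside the enveloping von Neumann algebra $B^{**}=C(Y)^{**}$, so that the Frank--Kirchberg description of the index recalled just before the theorem, namely $\Index(E_\mu)=\sum x_i^* x_i$ (ultra-weakly), applies and yields $M$. This framing also handles the fact that the $m_{jkt}$ are Borel rather than continuous, hence live in $B^{**}$ rather than in $B$.

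First I would verify the orthogonality relations
\[
E_\mu(m_{jkt}^*\,m_{j'k't'}) \;=\; \delta_{jj'}\delta_{kk'}\delta_{tt'}\,\chi_{X_j^k} \qquad \text{in } A^{**}.
\]
This is a direct pointwise computation on fibers: the disjointness of the partition $Y=\bigsqcup Y_j^{k,t}$ kills all off-diagonal terms in the fiber sum, while on the diagonal the factor $\mu(y)^{-1/2}\cdot\mu(y)^{-1/2}\cdot\mu(y)=1$ accumulates exactly to the characteristic projection of $X_j^k$, using that over each $x\in X_j^k$ the fiber meets $Y_j^{k,t}$ in a single point.

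Next I would check the reconstruction formula
\[
f \;=\; \sum_{j,k,t} m_{jkt}\cdot i\bigl(E_\mu(m_{jkt}^* f)\bigr), \qquad f\in C(Y).
\]
Given $y_0\in Y$ there is a unique triple $(j_0,k_0,t_0)$ with $y_0\in Y_{j_0}^{k_0,t_0}$, so every other summand vanishes at $y_0$; the surviving one equals $m_{j_0k_0t_0}(y_0)\cdot\sqrt{\mu(y_0)}\,f(y_0) = f(y_0)$ after cancellation of the two $\mu^{1/2}$ factors. Since only a single summand is nonzero at each point and the partial sums are uniformly bounded by the maximal number of preimages, this pointwise convergence on $Y$ translates to ultra-weak convergence in $B^{**}$.

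With the quasi-orthonormal basis property established, \cite{FrankKirJOP} yields $\Index(E_\mu)=\sum_{j,k,t} m_{jkt}^* m_{jkt}$ ultra-weakly in $B^{**}$, and at every $y$ this sum collapses to the single nonzero term $|m_{jkt}(y)|^2 = 1/\mu(y)$, which is precisely the bounded Borel function $M$. The main subtlety I expect is purely conventional: matching our pointwise-plus-bounded convergence against the ultra-weak convergence actually required by the Frank--Kirchberg definition of a quasi-orthonormal basis, and checking that their construction of the index element in $B^{**}$ is insensitive to whether the basis elements themselves live in $B$ or only in $B^{**}$. Once that bookkeeping is granted the computation itself is transparent.
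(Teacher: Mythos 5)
Your proposal is correct and follows essentially the same route as the paper: the paper likewise reduces the claim to the pointwise reconstruction identity $f(y)=\sum_{j,k,t} m_{jkt}\,(i\,E_\mu)(m^*_{jkt}f)(y)$, observes that only the single term indexed by the piece $Y_l^{p,s}$ containing $y$ survives, and cancels the two $\mu^{1/2}$ factors exactly as you do (the orthogonality relations you check first are stated in the paper in the paragraph preceding the theorem, and the ultraweak-convergence bookkeeping is relegated to the remark that follows it). No substantive difference in method.
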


\begin{proof}
In fact (cf. \cite{BDH}, \cite{Jolis91}, \cite{FrankKirJOP})
it is sufficient to verify that for any $y\in Y$
and any $f\in C(Y)$
$$
f(y)=\sum_{j,k,t} m_{jkt} (i\, E_\mu) (m^*_{jkt} f)(y)
$$
(in our notation with the inclusion $i$). We have
$y\in Y_l^{p,s}$ for some (uniquely defined) indices $l$, $p$,
and $s$. Then
\begin{eqnarray*}
  \sum_{j,k,t} m_{jkt} (i\, E_\mu) (m^*_{jkt} f)(y)
  &=& m_{lps}(y) \sum_{p(y')=p(y)} m^*_{lps}(y') f(y') \mu(y') \\
    &=& m_{lps}(y) m^*_{lps}(y) f(y) \mu(y)= f(y).
\end{eqnarray*}
\end{proof}

\begin{rk}\rm
The equality in Theorem \ref{teo:vesiindex}
should be considered in $B^{**}$.
In particular, $M$ is an element of $B^{**}$
in the following sense. We approximate $M$
(in fact each $m^*_i m_i$) by a sequence
of continuous functions point-wise. In
fact it is sufficient to approximate the
characteristic function of a set of the
form $K\setminus K'$, where $K$ and $K'\ss K$
are compacts. For each of them it is well
known how to find such a sequence, and then
we take the difference. Finally, we apply
the Egoroff theorem (see e.g. \cite[Sect. 21]{Halmos})
to see that the sequence converges ultraweakly,
i.e. the values on each regular positive measure
converge. See also \cite[III.1 and III.2]{Takesaki1}.
\end{rk}

Let us illustrate the above general considerations by the
following simple example (see \cite[Example 3.3]{FrankKirJOP} for
a similar situation).

\begin{ex}\rm
Consider the branched covering of Example \ref{ex:reflex_mod1}
defined by Figure \ref{fig:reflex_mod1}. This covering is equipped
with a conditional expectation $E: C(Y)\rightarrow C(X)$ of
index-finite type given by the formula
\[
E(f)(x)=\frac{1}{\# p^{-1}(x)}\sum_{y\in p^{-1}(x)} f(y)
\]
and the inner product (\ref{eq:inner_pr}) satisfies $\langle
f,g\rangle:=\langle f,g\rangle_E=E(f^*g).$ In this example the
weight function $\mu$ is $1$ over $[0,1/2]$ and $1/2$ over
$(1/2,1]$. Now we enumerate three intervals forming $Y$ in
the following way: by $1$ for the horizontal interval
and by $2$ and $3$ for two
others. Define three functions $e_1, e_2, e_3$ of $C(Y)$ such that
$e_1$ is equal to $1$ over the first interval of $Y$ and $0$
otherwise, and $e_i$ equals to $\sqrt{2}$ over the $i$-th interval
and $0$ otherwise for $i=2, 3$. Then obviously the vectors
$\{e_i\}$ form a quasi-orthonormal basis (i.e. an orthogonal system,
where inner squares of all vectors are projections) of the
$C(X)$-Hilbert module $(C(Y)^{**},\langle \cdot,\cdot\rangle_E)$.
Thus the index $\Index(E)$ coincides with the sum $\sum e_i^*e_i$.
This function is equal to $1$ over the first subinterval of $Y$,
and to $2$ over two other subintervals of $Y$. Its value in the
branching point defines an element of the discrete part of
$C(Y)^{**}$.
\end{ex}

\begin{ex}\label{ex:two_circles}\rm
Let $X$ be a unit circle and $Y$ consists of two disjoint copies
of $X$, in which the zero-point below is connected by an interval
with the $\pi/2$-point above as it is shown by Figure
\ref{fig:two_circles}.
\begin{figure}[ht]
\begin{picture}(120,120)
\put(50,110){\oval(100,20)} \put(50,60){\oval(100,20)}
\put(50,10){\oval(100,20)} \put(15,50){\line(5,4){62}}
\put(110,110){$Y$}
  \put(110,10){$X$}
  \put(15,50){\circle*{3}}
  \put(77,99){\circle*{3}}
\put(50,45){\vector(0,-1){20}} \put(60,35){$p$}
\end{picture}
 \caption{Example~\ref{ex:two_circles}}\label{fig:two_circles}
\end{figure}
Obviously, $p$ is open and by Theorem
\ref{teo:openbranched} it is a branched covering. The weight
function $\mu$, constructed by the formulas
$(\ref{eq:base_induct})-(\ref{eq:inner_prod_reflth})$ of Theorem
\ref{teo:branched_coverings_inner_product}, is equal to 1/2 on
fibers of cardinality 2, is equal identically to 1/4 on the
line between the circles, is the function
$f_1(x):=\frac{x}{2\pi}+\frac14$ over the interval $(0,\pi/2)$ of
the circle below, and is the function
$f_2(x):=-\frac{x}{2\pi}+\frac12$ over the interval $(0,\pi/2)$ of
the circle above. Then the index element of the corresponding
conditional expectation is a function of $C(Y)^{**}$, which equals
$\sqrt{2}$ on fibers, whose cardinality is 2, equals  2 on
the line between the circles, is the function $1/\sqrt{f_1(x)}$
over the interval $(0,\pi/2)$ of the circle below, and is the
function $1/\sqrt{f_2(x)}$ over the interval $(0,\pi/2)$ of the
circle above.
\end{ex}

\def\cprime{$'$}
\providecommand{\bysame}{\leavevmode\hbox to3em{\hrulefill}\thinspace}
\providecommand{\MR}{\relax\ifhmode\unskip\space\fi MR }
\providecommand{\MRhref}[2]{%
  \href{http://www.ams.org/mathscinet-getitem?mr=#1}{#2}
}
\providecommand{\href}[2]{#2}

\end{document}